\theoremstyle{plain}
\newtheorem{thm}{Theorem}[section]
\newtheorem{lemma}{Lemma}[section]
\newtheorem*{corollary}{Corollary}
\newtheorem*{main theorem}{Theorem}
\theoremstyle{definition}
\begin{document}

\title{A note on conformal equivalences}
\author{Do-Hyung Kim}
\address{Department of Mathematics, College of Natural Science, Dankook University,
San 29, Anseo-dong, Dongnam-gu, Cheonan-si, Chungnam, 330-714,
Republic of Korea} \email{mathph@dankook.ac.kr}

\keywords{conformality, conformal transformation, Laplacian, Wave
equation }

\begin{abstract}
A new characterization of conformal transformations is given. By
use of this, the general form of conformal transformation on
two-dimensional Minkowski space is given and its conformal
structure is analyzed.
\end{abstract}

\maketitle

\section{Introduction} \label{section:1}

In conformal geometry, Liouville's theorem states that the only
conformal maps in semi-Euclidean space of dimension bigger than
two are those generated by isometries, homotheties and inversions.
Therefore, if we want to find conformal maps which are bijective
on $n$-dimensional semi-Euclidean space $\mathbb{R}^n_\nu$, then
the maps are composite of homotheties, isometries, since inversion
can not be defined on the whole of $\mathbb{R}^n_\nu$. However,
this is not the case for $n=2$.

For two-dimensional cases, the corresponding spaces are Euclidean
plane $\mathbb{R}^2$ and two-dimensional Minkowski space
$\mathbb{R}^2_1$. It is well-known that any conformal
transformation on $\mathbb{R}^2$ is holomorphic or
anti-holomorphic. However, this is not the case for
$\mathbb{R}^2_1$. In this paper, the general form of conformal
transformation on $\mathbb{R}^2_1$ is obtained and differences of
conformal structures between $\mathbb{R}^2$ and $\mathbb{R}^2_1$
is discussed. To obtain this we characterize conformal
transformation by use of Lapalcian or d'Alembertian.

\section{A characterization of conformal transformation} \label{section:2}

Let $\mathbb{R}^n_\nu$ be a semi-Euclidean space with the metric
$\eta( \emph{x}, \emph{y}) = x_1 y_1 + \cdots + x_{n-\nu}
y_{n-\nu} - x_{n-\nu+1} y_{n-\nu+1} - \cdots - x_n y_n$. If we
consider $\eta$ as a $(0,2)$-tensor, then $\eta = (\eta_{ij})$ has
the component $\eta_{ij} = \epsilon_i \delta_{ij}$ where
$\epsilon_i$ is $1$ for $1 \leq i \leq n-\nu$ and $-1$ for
$n-\nu+1 \leq i \leq n$.

By conformal transformation, we mean a conformal diffeomorphism
defined on an open subset of $\mathbb{R}^n_\nu$. If $F : U \subset
\mathbb{R}^n_\nu \rightarrow V$ is a surjective conformal
transformation, then $U$ and $V$ are called conformally equivalent
and $F$ is called a conformal equivalence.. Let $M$ be a
semi-Riemannian manifold with a metric $g$ and $\nabla$ be the
unique Levi-Civita connection on $M$. Then, its Laplacian
$\triangle f$ is defined to be the divergence of gradient of $f$.

Let $F : U \subset \mathbb{R}^n_\nu \rightarrow \mathbb{R}^n_\nu$
be a $C^2$-diffeomorphism given by $(y_1, \cdots, y_n) = F(x_1,
\cdots, x_n)$, where $U$ is an open subset. Since $F$ is a
diffeomorphism, we can consider $F$ as a coordinate
transformation, and so we denote the Laplacian in terms of $x_i$'s
by $\triangle$, and the Laplacian in terms of $y_j$'s by
$\triangle^\prime$.

\begin{thm} \label{main1}
Let $U$ be an open subset of $\mathbb{R}^n_\nu$ and $F : U
\rightarrow \mathbb{R}^n_\nu$ be a $C^2$-diffeomorphism given by
$(y_1, \cdots, y_n) = F(x_1, \cdots, x_n)$. Assume that, for any
$C^2$-function $\varphi$ defined on $\mathbb{R}^n_\nu$, $\triangle
\varphi =0$ if and only if $\triangle^\prime \varphi = 0$.
Furthermore, when $\nu \neq 0, n$, we assume $\eta(\nabla y_1,
\nabla y_1)
> \eta(\nabla y_n, \nabla y_n)$. Then $F$ is a conformal
transformation.
\end{thm}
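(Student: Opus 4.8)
The plan is to extract from the hypothesis enough pointwise algebraic data about the Jacobian of $F$ to recognize $F^{*}\eta$ as a conformal multiple of $\eta$. Set $A_{jk}:=\eta(\nabla y_j,\nabla y_k)$ and $B_j:=\triangle y_j$, which are $C^{1}$ functions on $U$ since $F\in C^{2}$, and let $J=(\partial y_j/\partial x_i)$ be the Jacobian and $E=\mathrm{diag}(\epsilon_1,\dots,\epsilon_n)$, the matrix of $\eta$ in the $x_i$'s. Writing $\partial_j$ for the $j$-th partial derivative on the target, a twofold application of the chain rule, together with $(\nabla y_j)^{i}=\epsilon_i\,\partial y_j/\partial x_i$ and $\triangle y_j=\sum_i\epsilon_i\,\partial^{2}y_j/\partial x_i^{2}$, gives for any $C^{2}$ function $\varphi$ on $\mathbb{R}^{n}_{\nu}$
\[
\triangle(\varphi\circ F)=\sum_{j,k}A_{jk}\,(\partial_j\partial_k\varphi)\circ F+\sum_{j}B_j\,(\partial_j\varphi)\circ F .
\]
Since $\triangle'\varphi=\sum_j\epsilon_j\,\partial_j^{2}\varphi$ is the Laplacian in terms of the $y_j$'s, the hypothesis says precisely that $\varphi\circ F$ is $\triangle$-harmonic on $U$ whenever $\varphi$ is harmonic on $\mathbb{R}^{n}_{\nu}$ (only this implication is needed). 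So I would now feed harmonic polynomials into the displayed identity.

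Taking $\varphi=y_j$ (the $j$-th coordinate function on the target, so that $\varphi\circ F=y_j$) gives $B_j=\triangle y_j\equiv 0$ for every $j$, after which the identity becomes $\triangle(\varphi\circ F)=\sum_{j,k}A_{jk}(\partial_j\partial_k\varphi)\circ F$. Taking $\varphi=y_jy_k$ with $j\neq k$, which is harmonic, gives $2A_{jk}\equiv 0$, so the off-diagonal entries of $A$ vanish. Taking $\varphi=\epsilon_k y_j^{2}-\epsilon_j y_k^{2}$, harmonic because $\sum_l\epsilon_l\partial_l^{2}\varphi=2\epsilon_j\epsilon_k-2\epsilon_j\epsilon_k=0$, gives $\epsilon_k A_{jj}=\epsilon_j A_{kk}$; hence $\epsilon_j A_{jj}$ is the same function for all $j$. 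Calling it $\Omega$, we obtain $A_{jj}=\epsilon_j\Omega$, that is, $A=\Omega E$ as matrices.

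A short matrix manipulation then finishes the algebra. One checks directly that $A=JEJ^{T}$, so $JEJ^{T}=\Omega E$; multiplying on the right by $E$ and using $E^{2}=I$ gives $(JE)(J^{T}E)=\Omega I$, whence $J^{T}E=\Omega\,EJ^{-1}$ and therefore $J^{T}EJ=\Omega\,E$. But $J^{T}EJ$ is exactly the matrix of $F^{*}\eta$ in the $x_i$'s, so $F^{*}\eta=\Omega\,\eta$; and $\Omega$ is nowhere zero since $J$, hence $A=\Omega E$, is invertible.

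The remaining point, the sign of $\Omega$, is the one genuinely delicate step, and it is precisely what the auxiliary hypothesis handles. If $\nu=0$ or $\nu=n$ the metric $\eta$ is definite, so $A_{11}=\eta(\nabla y_1,\nabla y_1)$ has the sign of $\epsilon_1$ (note $\nabla y_1\neq 0$), forcing $\Omega=\epsilon_1 A_{11}>0$ automatically. If $0<\nu<n$ definiteness gives no information; but then $\epsilon_1=1$ and $\epsilon_n=-1$, so $\eta(\nabla y_1,\nabla y_1)=\Omega$ and $\eta(\nabla y_n,\nabla y_n)=-\Omega$, and the assumed inequality $\eta(\nabla y_1,\nabla y_1)>\eta(\nabla y_n,\nabla y_n)$ reads $\Omega>-\Omega$, i.e.\ $\Omega>0$. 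In all cases $F^{*}\eta=\Omega\,\eta$ with $\Omega$ a positive function, so $F$ is a conformal transformation, as claimed.
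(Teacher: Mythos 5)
Your proof is correct and follows the same basic strategy as the paper's: substitute the harmonic polynomials $y_j$, $y_jy_k$ ($j\neq k$), and suitable harmonic quadratics into the chain-rule expansion of $\triangle(\varphi\circ F)$ in order to extract the quantities $A_{jk}=\eta(\nabla y_j,\nabla y_k)$ and $B_j=\triangle y_j$. You do improve on the paper's execution in two places. First, your single family $\varphi=\epsilon_k y_j^{2}-\epsilon_j y_k^{2}$ handles the definite and indefinite signatures uniformly, whereas the paper splits into Case I (using $y_k^{2}+y_n^{2}$ and $y_1^{2}+y_k^{2}$) and Case II (using $y_1^{2}-y_k^{2}$). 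Second, and more substantively, what the harmonic test functions directly yield is $JEJ^{T}=\Omega E$, a statement about the \emph{rows} of the Jacobian, while conformality of $F$ is the statement $J^{T}EJ=\Omega E$ about the pullback metric; the paper passes from the former to the latter without comment (``since they all have the same length, the map is conformal''), and your explicit manipulation $(JE)(J^{T}E)=\Omega I$, hence $J^{T}EJ=\Omega E$ with $\Omega$ nowhere vanishing, closes that small gap. Your sign analysis of $\Omega$, using definiteness when $\nu=0,n$ and the hypothesis $\eta(\nabla y_1,\nabla y_1)>\eta(\nabla y_n,\nabla y_n)$ otherwise, matches the paper's.
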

\begin{proof}
By chain rule, we have $\frac{\partial \varphi}{\partial x_i} =
\sum\limits_{j=1}^n \frac{\partial \varphi}{\partial y_j}
\frac{\partial y_j}{\partial x_i}$ and $\frac{\partial^2
\varphi}{\partial x_i^2} = \sum\limits_{j,k=1}^{n}
\frac{\partial^2 \varphi}{\partial y_j\partial y_k} \frac{\partial
y_k}{\partial x_i} \frac{\partial y_j}{\partial x_i} +
\sum\limits_{j=1}^n \frac{\partial \varphi}{\partial y_j}
\frac{\partial^2 y_j}{\partial x_i^2}$.

Therefore, we have

\begin{eqnarray*}
\triangle \varphi & = &\sum\limits_{i=1}^n \epsilon_i
\frac{\partial^2 \varphi}{\partial x_i^2} \\
& = & \sum\limits_{i=1}^n \epsilon_i \Big[ \sum\limits_{j,k=1}^{n}
\frac{\partial^2 \varphi}{\partial y_j\partial y_k} \frac{\partial
y_k}{\partial x_i} \frac{\partial y_j}{\partial x_i} +
\sum\limits_{j=1}^n \frac{\partial \varphi}{\partial y_j}
\frac{\partial^2 y_j}{\partial x_i^2} \Big].
\end{eqnarray*}
If we let $\varphi = y_j$, then since $\triangle^\prime \varphi =
0$, we have $\triangle \varphi = \triangle y_j = 0$. If we
substitute this into the above equation, we have

$$ \triangle \varphi = \sum\limits_{i=1}^n \epsilon_i \Big[ \sum\limits_{j,k=1}^{n}
\frac{\partial^2 \varphi}{\partial y_j\partial y_k} \frac{\partial
y_k}{\partial x_i} \frac{\partial y_j}{\partial x_i} \Big].$$

If we let $\varphi = y_j y_k$ with $j \neq k$, then since
$\triangle^\prime \varphi = 0$, we have $\triangle \varphi = 0$.
Therefore, if we put $\varphi = y_j y_k$ into the above equation,
we have
$$ \sum\limits_{i=1}^n \epsilon_i \Big( \frac{\partial
y_j}{\partial x_i} \frac{\partial y_k}{\partial x_i} \Big) = 0.$$

This tells us that rows of $\Big( \frac{\partial y_i}{\partial
x_j} \Big)$ are mutually orthogonal. If we consider this, we have

\begin{eqnarray*}
\triangle \varphi & = & \sum\limits_{j,k=1}^n \frac{\partial^2
\varphi}{\partial y_j}{\partial y_k} \Big( \sum\limits_{i=1}^n
\epsilon_i \frac{\partial y_j}{\partial x_i} \frac{\partial
y_k}{\partial x_i} \Big)\\
& = & \sum\limits_{i,j=1}^n \epsilon_i \frac{\partial^
\varphi}{\partial y_j^2} \Big( \frac{\partial y_j}{\partial x_i}
\Big)^2.
\end{eqnarray*}

We now consider two separate cases.

Case I : $\nu \neq 0, n$.\\
If we let $\varphi = u_k^2 + y_n^2$ for $1 \leq k \leq n-\nu$,
then we have $\triangle^\prime \varphi = 0$ and thus $\triangle
\varphi = \sum\limits_{i,j=1}^n \epsilon_i \frac{\partial^
\varphi}{\partial y_j^2} \Big( \frac{\partial y_j}{\partial x_i}
\Big)^2 = 0$.

Therefore, we have
$$ \sum\limits_{i=1}^n \epsilon_i \Big[ \Big(\frac{\partial
y_k}{\partial x_i} \Big)^2 + \Big( \frac{\partial y_n}{\partial
x_i} \Big)^2 \Big] = 0,$$ for each $1 \leq k \leq n-\nu$.

This tells us that, for $1 \leq k \leq n-\nu$, each $k$-th row has
the same length as that of the $n$-th row.

If we let $\varphi = y_1^2 + y_k^2$ for $n-\nu+1 \leq k \leq n$,
then we have $\triangle^\prime \varphi = 0$ and thus $\triangle
\varphi = \sum\limits_{i,j=1}^n \epsilon_i \frac{\partial^
\varphi}{\partial y_j^2} \Big( \frac{\partial y_j}{\partial x_i}
\Big)^2 = 0$.

Therefore, we have
$$ \sum\limits_{i=1}^n \epsilon_i \Big[ \Big(\frac{\partial
y_1}{\partial x_i} \Big)^2 + \Big( \frac{\partial y_k}{\partial
x_i} \Big)^2 \Big] = 0,$$ for each $n-\nu+1 \leq k \leq n$.

This tells us that, for $n-\nu+1 \leq k \leq n$, each $k$-th row
has the same length as that of the first row.

Since $\eta(\nabla y_1, \nabla y_1)
> \eta(\nabla y_n, \nabla y_n)$, the first $n-\nu$ rows are
spacelike and the last $\nu$ rows are timelike. Since they all
have the same length, the map $(x_1, \cdots, x_n) \mapsto (y_1,
\cdots, y_n)$ is conformal.

Case II : $\nu=0$ or $\nu = n$.\\
It suffices to consider the case $\nu = 0$.\\
If we let $\varphi = y_1^2 - y_k^2$, then by the same argument as
in the case I, we can show that the map $(x_1, \cdots, x_n)
\mapsto (y_1, \cdots, y_n)$ is conformal.

\end{proof}

When $\nu \neq 0, n$, the condition $\eta(\nabla y_1, \nabla y_1)
> \eta(\nabla y_n, \nabla y_n)$ is essential. For example, when
$2\nu=n$, the map $(x_1, \cdots, x_\nu, x_{\nu+1}, \cdots, x_n)
\mapsto (x_{\nu+1}, \cdots, x_n, x_1, \cdots, x_\nu)$ satisfies
the condition $\triangle \varphi = 0 \Leftrightarrow
\triangle^\prime \varphi = 0$ but the map is not conformal.

We now consider the converse of the above theorem. For this, we
need the following theorem, called Liouville's theorem.

\begin{thm}
Every $C^4$ conformal transformation of a region of
pseudo-Euclidean space of dimension $\geq 3$, is a composite of
isometries, dilations and inversions.
\end{thm}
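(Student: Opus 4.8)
The plan is to carry out the classical conformal-factor argument, which proceeds uniformly in every signature; throughout I write $\eta$ also for the flat metric on $\R^n_\nu$. A $C^{4}$ conformal transformation $F$ of a connected region $U\subseteq\R^n_\nu$ onto $F(U)$ satisfies $F^{*}\eta=\lambda^{2}\eta$ for a positive function $\lambda$ on $U$; since $\lambda^{2}\epsilon_i=\sum_{j}\epsilon_j(\partial_i F_j)^{2}$, this $\lambda$ is of class $C^{3}$. As $F$ is an isometry onto an open subset of the flat space $(\R^n_\nu,\eta)$, the metric $\lambda^{2}\eta=F^{*}\eta$ is itself flat: its Riemann tensor vanishes identically on $U$. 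The proof then has two parts: (i) classify the positive $C^{3}$ functions $\lambda$ on a connected open set for which $\lambda^{2}\eta$ is flat; (ii) use the classification to put $F$ into the stated form.

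For (i) I would invoke the conformal transformation law of the curvature tensor, valid in any signature: for $\widetilde\eta=e^{2f}\eta$ the $(0,4)$ Riemann tensor of $\widetilde\eta$ equals, up to a convention-dependent sign, $e^{2f}$ times the Kulkarni--Nomizu product of $\eta$ with the symmetric $2$-tensor $T$ having Cartesian components $T_{ij}=\partial_i\partial_j f-\partial_i f\,\partial_j f+\tfrac{1}{2}\,\eta(\nabla f,\nabla f)\,\eta_{ij}$ (the curvature of $\eta$ being zero). Requiring this to vanish and using that, for $n\ge 3$ and $\eta$ nondegenerate, the Kulkarni--Nomizu product with $\eta$ is injective on symmetric $2$-tensors, one gets $T=0$. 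Taking $f=\log\lambda$ and substituting $\rho=1/\lambda$ makes the quadratic terms cancel and reduces $T=0$ to
\[
\partial_i\partial_j\rho=\tfrac{1}{2}\,\rho^{-1}\,\eta(\nabla\rho,\nabla\rho)\,\eta_{ij}.
\]
Hence $\partial_i\partial_j\rho=0$ for $i\ne j$ and $\partial_i^{2}\rho=\epsilon_i\Phi$ with $\Phi=\tfrac{1}{2}\rho^{-1}\eta(\nabla\rho,\nabla\rho)$; comparing the diagonal equations across indices forces $\Phi$ to be a constant, say $\Phi\equiv 2a$, so that $\rho(x)=a\,\eta(x,x)+\eta(b,x)+c$ for constants $a,c$ and a vector $b$, and reinserting this into the definition of $\Phi$ yields the algebraic constraint $\eta(b,b)=4ac$.

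For (ii) the point is that each admissible $\rho$ is the conformal factor, with respect to $\eta$, of a Möbius transformation defined on $U$. Recall that translations and linear maps preserving $\eta$ are isometries, dilations scale $\eta$ by a positive constant, and the standard inversion $J\colon x\mapsto x/\eta(x,x)$ satisfies $J^{*}\eta=\eta(x,x)^{-2}\eta$. Using the constraint $\eta(b,b)=4ac$ one checks --- completing the square when $a\ne0$, translating when $a=0$ and $b\ne0$ --- that $\rho$ is of one of three types: a positive constant; $a\,\eta(x-x_0,x-x_0)$ with $a\ne0$; or $\eta(b,x)+c$ with $b$ null. Taking $G$ respectively a dilation; a dilation composed with the inversion centred at $x_0$ (a conjugate of $J$ by a translation); or a special conformal transformation $J\circ\tau\circ J$ with $\tau$ a translation by a null vector, precomposed with a translation --- one obtains in each case a composite $G$ of isometries, dilations and inversions with $G^{*}\eta=\lambda^{2}\eta$ on $U$. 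Because $\rho=1/\lambda>0$ on $U$, the region $U$ misses the null cones on which the relevant inversions are undefined, so $G$ restricts to a diffeomorphism of $U$ onto $G(U)$; then a short computation shows that $G\circ F^{-1}\colon F(U)\to G(U)$ pulls $\eta$ back to $\eta$, hence is an affine map with $\eta$-preserving linear part and extends to a global isometry $\Psi$ of $\R^n_\nu$. Therefore $F=\Psi^{-1}\circ G$ is a composite of isometries, dilations and inversions.

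I expect the main obstacle to be step (ii): part (i) is a routine curvature computation followed by an elementary ordinary differential equation, once the conformal transformation law and the injectivity of the Kulkarni--Nomizu map (which is where $n\ge 3$ enters) are in hand. In step (ii), by contrast, one has to produce the explicit Möbius map $G$ matching a given admissible $\rho$ and --- especially in the properly indefinite case $0<\nu<n$, where null cones are present --- check carefully that $G$ genuinely restricts to a diffeomorphism on the relevant subregion, so that ``composite of isometries, dilations and inversions'' is literally correct there. The constraint $\eta(b,b)=4ac$ from (i) is precisely the bookkeeping that makes the three-case matching in (ii) succeed.
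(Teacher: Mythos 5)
Your argument is sound in outline, but note that the paper does not actually prove this statement at all: its ``proof'' is a one-line citation of Theorem 15.2 in Dubrovin--Fomenko--Novikov. So your route is necessarily different from the paper's, though it is in essence the classical Liouville argument (and close to the one in the cited reference): use $F^{*}\eta=\lambda^{2}\eta$ and flatness of the target to conclude that $\lambda^{2}\eta$ is flat, apply the conformal transformation law of the curvature together with injectivity of the Kulkarni--Nomizu product with $\eta$ on symmetric $2$-tensors (which is exactly where $n\geq 3$ enters), deduce that $\rho=1/\lambda$ satisfies $\partial_i\partial_j\rho=\frac12\rho^{-1}\eta(\nabla\rho,\nabla\rho)\eta_{ij}$ and hence is the quadratic polynomial $a\,\eta(x,x)+\eta(b,x)+c$ with $\eta(b,b)=4ac$, match each such $\rho$ with an explicit M\"obius map $G$, and finish by the rigidity of local isometries of flat pseudo-Euclidean space. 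All of these steps are correct, including the three-case normal form for $\rho$ (constant; $a\,\eta(x-x_0,x-x_0)$ after completing the square, the constant term cancelling precisely because $\eta(b,b)=4ac$; affine with null gradient) and the observation that positivity of $\rho$ on $U$ keeps $U$ off the null cones where the inversions degenerate. What your version buys is self-containedness and transparency about where the hypotheses ($n\geq3$, regularity of $\lambda$, connectedness of the region) are used; what the paper's citation buys is brevity, since the theorem is only used here as a stepping stone to the case of conformal maps defined on all of $\mathbb{R}^n_\nu$. If you were to write your sketch out in full, the two places needing the most care are the ones you already flag: the explicit construction of $G$ in the indefinite case, and the final step that a map pulling $\eta$ back to $\eta$ on a connected open set is the restriction of a global affine isometry.
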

\begin{proof}
This is Theorem 15.2 in Ref. \cite{DFN}.
\end{proof}

In fact, this theorem was first proved in dimensiona 3 by
Liouville in 1850 for $C^3$ transformations. In 1958, Hartman had
shown that the result holds for $C^1$ transformations(See
\cite{Hartman}), and so we now assume that the result holds for
any $C^2$ transformations.

If we consider conformal transformation defined on the whole of
$\mathbb{R}^n_\nu$, inversions can not occur, and so we have the
following.

\begin{thm}
Let $F : \mathbb{R}^n_\nu \rightarrow \mathbb{R}^n_\nu$ be a
conformal transformation with $n \geq 3$ given by $(y_1, \cdots,
y_n) = F(x_1, \cdots, x_n)$. Then we have
$$ (y_1, \cdots, y_n)^t = \alpha A (x_1, \cdots, x_n)^t +
\emph{b},$$ where the superscript $t$ means the transpose,
$\alpha$ is a real number and $A$ is an orthogonal matrix.
\end{thm}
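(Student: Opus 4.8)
The proof will combine Liouville's theorem, stated above, with the elementary observation that, among the building blocks it provides, only the isometries and dilations are globally defined on $\mathbb{R}^n_\nu$. Recall that an isometry of $\mathbb{R}^n_\nu$ has the form $x \mapsto Ax + c$ with $A^{t}\eta A = \eta$, a dilation has the form $x \mapsto \lambda x + d$, and each of these is a diffeomorphism of $\mathbb{R}^n_\nu$ onto itself; moreover the collection of all finite composites of such maps is precisely the similarity group $\{\, x \mapsto \alpha A x + b : \alpha \in \mathbb{R}\setminus\{0\},\ A^{t}\eta A = \eta,\ b \in \mathbb{R}^n \,\}$, which is directly checked to be closed under composition. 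An inversion, on the other hand, is (up to pre- and post-composition with a similarity) the map $x \mapsto \dfrac{x}{\eta(x,x)}$, whose domain is the complement in $\mathbb{R}^n_\nu$ of the quadric $\{\, x : \eta(x,x) = 0 \,\}$ --- a single point when $\nu = 0$ or $\nu = n$, and a nonempty cone otherwise --- so an inversion is never a globally defined self-map of $\mathbb{R}^n_\nu$.

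By Liouville's theorem, $F$ agrees on $\mathbb{R}^n_\nu$ with some finite composite $G_k \circ \cdots \circ G_1$ in which each $G_i$ is an isometry, a dilation, or an inversion. Suppose some $G_i$ were an inversion, and let $m$ be the least such index. Then $H := G_{m-1}\circ\cdots\circ G_1$ (with $H = \mathrm{id}$ if $m=1$) is a composite of isometries and dilations, hence a diffeomorphism of $\mathbb{R}^n_\nu$ onto itself, and in particular is surjective. Since the domain of $G_m$ is a proper subset of $\mathbb{R}^n_\nu$, there is a point $x_0 \in \mathbb{R}^n_\nu$ with $H(x_0) \notin \mathrm{dom}\,G_m$; at such a point the composite $G_k \circ \cdots \circ G_1$ cannot be defined, contradicting the hypothesis that $F$ is defined on all of $\mathbb{R}^n_\nu$. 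Therefore no $G_i$ is an inversion, $F$ is a composite of isometries and dilations, and so $F(x) = \alpha A x + b$ for some nonzero real number $\alpha$, orthogonal matrix $A$, and $b \in \mathbb{R}^n$, which is the assertion of the theorem.

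The only point requiring care is the domain bookkeeping in the second paragraph: one must be sure that each inversion really is only partially defined on $\mathbb{R}^n_\nu$ while isometries and dilations are global, and that no later factor in the composition can restore global definedness once it has been lost. Conceptually this is the statement that the subgroup of the conformal (M\"obius) group of $\mathbb{R}^n_\nu$ consisting of genuine diffeomorphisms of $\mathbb{R}^n_\nu$ is exactly the similarity group; the remaining verifications are routine facts about affine similarities.
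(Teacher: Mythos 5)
Your argument is correct and is essentially the paper's own: the paper offers no proof beyond the preceding remark that ``inversions can not occur'' on all of $\mathbb{R}^n_\nu$, which is exactly the observation you elaborate. Your version is more careful (identifying the similarity group as the composites of the globally defined building blocks and doing the domain bookkeeping), but it is the same route via Liouville's theorem, not a different one.
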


\begin{thm}
Let $F : \mathbb{R}^n_\nu \rightarrow \mathbb{R}^n_\nu$ be a $C^2$
conformal transformation given by $(y_1, \cdots, y_n) = F(x_1,
\cdots, x_n)$ with $n \geq 3$. Then, for any $C^2$-function
$\varphi$ defined on $\mathbb{R}^n_\nu$, we have $\triangle
\varphi = 0$ if and only if $\triangle^\prime \varphi = 0$.
Furthermore, if $\nu \neq 0, n$, then we have $\eta(\nabla y_1,
\nabla y_1) > \eta(\nabla y_n, \nabla y_n)$.
\end{thm}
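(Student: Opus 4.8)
The strategy is to exploit the rigidity furnished by Liouville's theorem. Since $n\geq 3$ and $F$ is a conformal transformation of all of $\mathbb{R}^n_\nu$, Liouville's theorem rules out inversions, and by the preceding theorem $F$ must be affine,
\[
(y_1,\cdots,y_n)^t=\alpha A (x_1,\cdots,x_n)^t+\emph{b},
\]
with $\alpha$ a nonzero real number (nonzero because $F$ is a diffeomorphism) and $A$ an $\eta$-orthogonal matrix, i.e. $A^t\eta A=\eta$; since $\eta^2=I$ this is equivalent to $A\eta A^t=\eta$, which is the form I will actually use. The conceptual point is that, although the Laplacian fails to be conformally covariant when $n\geq 3$, a \emph{global} conformal map of $\mathbb{R}^n_\nu$ is so constrained that its pullback of $\eta$ is merely the constant multiple $\alpha^2\eta$, and rescaling a metric by a constant only rescales its Laplacian.

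From the affine form one reads off $\partial y_j/\partial x_i=\alpha A_{ji}$ and $\partial^2 y_j/\partial x_i^2=0$. Feeding this into the chain-rule identity already obtained in the proof of Theorem~\ref{main1} (the term containing $\partial^2 y_j/\partial x_i^2$ simply drops out) gives
\[
\triangle\varphi=\alpha^2\sum_{j,k=1}^n \frac{\partial^2\varphi}{\partial y_j\,\partial y_k}\Big(\sum_{i=1}^n\epsilon_i A_{ji}A_{ki}\Big).
\]
The inner sum is precisely the $(j,k)$ entry of $A\eta A^t$, hence equals $\eta_{jk}=\epsilon_j\delta_{jk}$, so $\triangle\varphi=\alpha^2\sum_{j=1}^n\epsilon_j\,\partial^2\varphi/\partial y_j^2=\alpha^2\,\triangle^\prime\varphi$. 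As $\alpha\neq 0$, this proves $\triangle\varphi=0$ if and only if $\triangle^\prime\varphi=0$ for every $C^2$ function $\varphi$.

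For the remaining claim, I would compute the gradient directly: $(\nabla y_j)^i=\epsilon_i\,\partial y_j/\partial x_i=\epsilon_i\alpha A_{ji}$, whence $\eta(\nabla y_j,\nabla y_j)=\sum_{i=1}^n\epsilon_i(\epsilon_i\alpha A_{ji})^2=\alpha^2\sum_{i=1}^n\epsilon_i A_{ji}^2=\alpha^2\epsilon_j$, again by $A\eta A^t=\eta$. When $\nu\neq 0,n$ one has $\epsilon_1=1$ and $\epsilon_n=-1$, so $\eta(\nabla y_1,\nabla y_1)=\alpha^2>-\alpha^2=\eta(\nabla y_n,\nabla y_n)$, as required.

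I do not expect a genuine obstacle here: once the affine form is in hand the computation is mechanical. The only points needing care are keeping the orthogonality relation in its $\eta$-twisted shape $A\eta A^t=\eta$ (rather than the Euclidean $AA^t=I$) and recording that the conformal factor $\alpha$ is constant and nonzero, which is exactly what makes the Laplacian transform so simply.
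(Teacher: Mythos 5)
Your proposal is correct and follows essentially the same route as the paper: both invoke the affine form $y = \alpha A x + b$ from the preceding theorem, push a general $\varphi$ through the chain rule, and use the $\eta$-orthogonality relation $\sum_i \epsilon_i a_{ji}a_{ki} = \epsilon_j\delta_{jk}$ to obtain $\triangle\varphi = \alpha^2\triangle^\prime\varphi$. Your explicit computation $\eta(\nabla y_j,\nabla y_j)=\alpha^2\epsilon_j$ for the final claim is just a quantitative version of the paper's observation that the first and $n$-th rows of $A$ are spacelike and timelike respectively.
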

\begin{proof}

 By the above Theorem, we have $y_i =
\alpha \sum\limits_{j=1}^n a_{ij}x_j +b_i$ where $(a_{ij})$ is an
orthogonal matrix. By the chain rule, we have $\frac{\partial^2
\varphi}{\partial x_i^2} = \alpha \sum\limits_{j,k=1}^n
a_{ji}a_{ki} \frac{\partial^2 \varphi}{\partial y_j \partial
y_k}$. Therefore, we have
\begin{eqnarray*}
\triangle \varphi & = & \sum\limits_{i=1}^n \epsilon_i
\frac{\partial^2 \varphi}{\partial x_i^2}\\
& = & \alpha^2 \sum\limits_{i,j,k=1}^n \epsilon_i a_{ji}a_{ki}
\frac{\partial^2 \varphi}{\partial y_j \partial y_k}\\
& = & \alpha^2 \sum\limits_{j,k=1}^n \epsilon_j \delta_{jk}
\frac{\partial^2 \varphi}{\partial y_j \partial y_k}\\
& = & \alpha^2 \sum\limits_{j=1}^n \epsilon_j \frac{\partial^2
\varphi}{\partial y_j^2}\\
& = & \alpha^2 \triangle^\prime \varphi,
\end{eqnarray*}
and so we have $\triangle \varphi = 0$ if and only if
$\triangle^\prime \varphi = 0$.\\
When $\nu \neq 0, n$, since $\nabla y_1$ and $\nabla y_n$ are the
first and $n$-th rows of the orthogonal matrix $(a_{ij})$, they
are spacelike and timelike, respectively and thus we have
$\eta(\nabla
y_1, \nabla y_1) > \eta(\nabla y_n, \nabla y_n)$.\\

\end{proof}

We remark that the above theorem does not hold if the domain of
definition is a proper subset of $\mathbb{R}^n_\nu$ because of
inversions. However, when $n=2$, we can confine the domain to any
 open subsets.

\begin{thm} \label{main2}
Let $U$ be an open subset of $\mathbb{R}^2_\nu$ and $F : U
\rightarrow  \mathbb{R}^2_\nu$ be a $C^2$ conformal transformation
given by $(y_1, y_2) = F(x_1, x_2)$. Then, for any $C^2$ function
$\varphi$ defined on $U$, we have $\triangle \varphi = 0$ if and
only if $\triangle^\prime \varphi = 0$. Furthermore, if $\nu \neq
0, 2$, then we have $\eta(\nabla y_1, \nabla y_1) > \eta(\nabla
y_2, \nabla y_2)$.

\end{thm}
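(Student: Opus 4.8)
The plan is to reverse the chain‑rule computation from the proof of Theorem~\ref{main1}, this time taking conformality of $F$ as the hypothesis. Write $J=\big(\tfrac{\partial y_i}{\partial x_j}\big)$ for the Jacobian of $F$ and $\bar\eta=\operatorname{diag}(\epsilon_1,\epsilon_2)$ for the matrix of $\eta$. By definition, conformality of $F$ means $J^{t}\bar\eta J=\lambda\,\bar\eta$ on $U$ for a strictly positive function $\lambda$ (the conformal factor); since $\bar\eta^{2}=I$, this rearranges in one line to the equivalent relation $J\bar\eta J^{t}=\lambda\,\bar\eta$, i.e.
\[
\sum_{i=1}^{2}\epsilon_i\,\frac{\partial y_j}{\partial x_i}\,\frac{\partial y_k}{\partial x_i}=\lambda\,\epsilon_j\,\delta_{jk}.
\]
Feeding this, together with the harmonicity $\triangle y_j=0$ of the components of $F$, into the identity obtained in the proof of Theorem~\ref{main1},
\[
\triangle\varphi=\sum_{j,k=1}^{2}\frac{\partial^{2}\varphi}{\partial y_j\partial y_k}\Big(\sum_{i=1}^{2}\epsilon_i\,\frac{\partial y_j}{\partial x_i}\,\frac{\partial y_k}{\partial x_i}\Big)+\sum_{j=1}^{2}\frac{\partial\varphi}{\partial y_j}\,\triangle y_j,
\]
collapses it to $\triangle\varphi=\lambda\sum_{j=1}^{2}\epsilon_j\,\frac{\partial^{2}\varphi}{\partial y_j^{2}}=\lambda\,\triangle'\varphi$; since $\lambda$ is nowhere zero, $\triangle\varphi=0\iff\triangle'\varphi=0$.

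So the whole matter reduces to proving $\triangle y_j=0$, and this is exactly the point where $n=2$ is indispensable and Liouville's theorem is unavailable (on a proper open subset inversions occur, as the remark above records). The tool I would use is the conformal covariance of the Laplace--Beltrami operator in dimension two: from $\triangle_g f=\tfrac{1}{\sqrt{|\det g|}}\,\partial_i\!\big(\sqrt{|\det g|}\,g^{ij}\partial_j f\big)$ one checks that for $\dim=2$ the density $\sqrt{|\det g|}\,g^{ij}$ is unchanged under $g\mapsto\Omega^{2}g$ (the factor $\Omega^{2}$ from the volume element cancels the factor $\Omega^{-2}$ from the inverse metric), hence $\triangle_{\Omega^{2}g}=\Omega^{-2}\triangle_{g}$ on functions. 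Now $F$ is an isometry from $(U,F^{*}\eta)$ onto its image in $(\mathbb{R}^{2}_\nu,\eta)$, and the Laplace--Beltrami operator commutes with pullback by isometries; since the flat Laplacian annihilates the coordinate functions, $\triangle_{F^{*}\eta}(y_j)=0$, and because $F^{*}\eta=\lambda\eta$ with $\dim=2$ gives $\triangle_{F^{*}\eta}=\lambda^{-1}\triangle$, we conclude $\triangle y_j=0$. The same two facts in fact deliver the first assertion in one stroke, bypassing the computation above, via $\triangle\varphi=\lambda\,\triangle_{F^{*}\eta}\varphi=\lambda\,\triangle'\varphi$. A more hands‑on alternative, closer to the paper's style, is to observe that in the $2\times2$ case the three scalar equations above force the rows of $J$ to be $\eta$‑orthogonal vectors of equal signed length, hence related by $\big(\tfrac{\partial y_2}{\partial x_1},\tfrac{\partial y_2}{\partial x_2}\big)=\pm\big(\tfrac{\partial y_1}{\partial x_2},\tfrac{\partial y_1}{\partial x_1}\big)$ (up to the Lorentzian sign pattern); differentiating these Cauchy--Riemann‑type relations and using equality of mixed partials yields $\triangle y_j=0$ directly.

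For the ``furthermore'' clause, $\nu\neq0,2$ forces $\nu=1$, so $\epsilon_1=1$ and $\epsilon_2=-1$. Since $\eta(\nabla y_j,\nabla y_k)=\sum_i\epsilon_i\tfrac{\partial y_j}{\partial x_i}\tfrac{\partial y_k}{\partial x_i}=(J\bar\eta J^{t})_{jk}$, the relation $J\bar\eta J^{t}=\lambda\bar\eta$ gives $\eta(\nabla y_1,\nabla y_1)=\lambda$ and $\eta(\nabla y_2,\nabla y_2)=-\lambda$, and $\lambda>0$ yields $\eta(\nabla y_1,\nabla y_1)>\eta(\nabla y_2,\nabla y_2)$. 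I expect the principal difficulty to be expository rather than technical: presenting the dimension‑two conformal covariance of the Laplacian (equivalently, the harmonicity of the component functions of a conformal map) in a clean, self‑contained way, and keeping track throughout that $\lambda$ is strictly positive — this positivity is precisely what fixes the direction of the inequality in the ``furthermore'' clause and what excludes causal‑type‑reversing maps such as $(x_1,x_2)\mapsto(x_2,x_1)$, for which $F^{*}\eta=-\eta$.
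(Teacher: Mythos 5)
Your proposal is correct and, at its core, uses exactly the two facts the paper relies on: the conformal covariance of the Laplace--Beltrami operator in dimension two (so that $\triangle_{F^{*}\eta}=\lambda^{-1}\triangle$) together with the fact that $F$ is an isometry from $(U,F^{*}\eta)$ onto $(F(U),\eta)$, giving $\triangle\varphi=\lambda\,\triangle'\varphi$; this is the paper's proof almost verbatim, and your explicit computation $\eta(\nabla y_1,\nabla y_1)=\lambda>-\lambda=\eta(\nabla y_2,\nabla y_2)$ is just a sharper rendering of the paper's remark that spacelike and timelike directions are preserved.
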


\begin{proof}
Let $g = F^\ast \eta$ be the pull back of $\eta$ through $F$.
Then, $(U, \eta)$ and $(U,g)$ are conformally equivalent and by
calculation, we have  $\triangle^g \varphi = 0$ if and only if
$\triangle \varphi = 0$, where $\triangle^g$ is the Laplacian with
respect to the metric $g$. Since we can consider the conformal
transformation $F$ as an isometry from $(U,g)$ onto $(F(U),
\eta)$, we have $\triangle^g = 0$ if and only if $\triangle^\prime
= 0$. Therefore, $\triangle \varphi = 0$ if and only if
$\triangle^\prime \varphi = 0$.

 When $\nu = 1$, since spacelike
vector (timelike vector, respectively) must be sent to spacelike
vector (timelike vector, respectively), we have $\eta(\nabla y_1,
\nabla y_1) > \eta(\nabla y_2, \nabla y_2)$.
\end{proof}

In conclusion, by combining Theorem \ref{main1} and Theorem
\ref{main2}, we have the following theorem.

\begin{thm} \label{final}
Let $U$ be an open subset of $\mathbb{R}^2_\nu$ and $F : U
\rightarrow \mathbb{R}^2_\nu$ be a $C^2$ diffeomorphism. Then the
necessary and sufficient condition for $F$ to be a conformal
transformation is that $\triangle \varphi = 0$ if and only if
$\triangle^\prime \varphi = 0$ for any $C^2$ function $\varphi$
and furthermore, if $\nu \neq 0, 2$, then $\eta(\nabla y_1, \nabla
y_1)
> \eta(\nabla y_2, \nabla y_2)$.
\end{thm}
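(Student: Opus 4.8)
The plan is to read Theorem~\ref{final} simply as the conjunction of Theorem~\ref{main1} and Theorem~\ref{main2}, so that the proof reduces to checking that the hypotheses and conclusions of those two results dovetail in the case $n = 2$. I would organize it as the two implications making up the ``necessary and sufficient'' claim.

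For sufficiency, suppose that $\triangle\varphi = 0 \Leftrightarrow \triangle^\prime\varphi = 0$ for every $C^2$ function $\varphi$ and, in the case $\nu = 1$, that $\eta(\nabla y_1,\nabla y_1) > \eta(\nabla y_2,\nabla y_2)$. These are exactly the hypotheses of Theorem~\ref{main1} with $n = 2$ (the test functions actually invoked in its proof --- $y_j$, $y_j y_k$, and $y_1^2\pm y_k^2$ --- are restrictions of globally defined polynomials, so passing between functions on $U$ and functions on $\mathbb{R}^2_\nu$ changes nothing), and hence $F$ is a conformal transformation. For necessity, suppose $F$ is a $C^2$ conformal transformation. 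Theorem~\ref{main2} is stated precisely for an arbitrary open $U\subseteq\mathbb{R}^2_\nu$ and delivers both the equivalence $\triangle\varphi = 0 \Leftrightarrow \triangle^\prime\varphi = 0$ for all $C^2$ functions $\varphi$ on $U$ and, when $\nu = 1$, the inequality $\eta(\nabla y_1,\nabla y_1) > \eta(\nabla y_2,\nabla y_2)$; this is exactly the asserted condition. I would also note that for $\nu = 0$ and $\nu = 2$ the gradient hypothesis is vacuous, so in those cases the statement reduces to the Laplacian equivalence, again covered by the two cited theorems.

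I do not expect a genuine obstacle here: the substantive content has already been carried out in Theorems~\ref{main1} and \ref{main2}, and what remains is bookkeeping. The one point I would be careful to verify is that the two descriptions of ``$\triangle^\prime$'' appearing earlier --- the one obtained from $\triangle$ by the coordinate change $F$, and the one obtained via the pull-back metric $g = F^\ast\eta$ together with the isometry $F:(U,g)\to(F(U),\eta)$ --- are literally the same operator; this is immediate, since $F$ is a diffeomorphism, so the $y_j$ are bona fide coordinates on $U$ and $\triangle^g$ coincides with $\triangle^\prime$ under that identification. With that observation in place, Theorem~\ref{final} follows by quoting Theorem~\ref{main1} for the ``if'' direction and Theorem~\ref{main2} for the ``only if'' direction.
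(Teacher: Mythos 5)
Your proposal matches the paper exactly: the paper derives Theorem~\ref{final} by simply combining Theorem~\ref{main1} (sufficiency) and Theorem~\ref{main2} (necessity), which is precisely your argument. Your extra remarks on the globally defined polynomial test functions and the identification of $\triangle^\prime$ with $\triangle^g$ are sensible bookkeeping that the paper leaves implicit.
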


\section{Conformal Transformation on $\mathbb{R}^2_\nu$} \label{section:3}

As we have seen in Liouville's Theorem, there are some rigidity in
conformal geometry on $\mathbb{R}^n_\nu$ when $n \geq 3$. However,
in the Euclidean plane $\mathbb{R}^2$, there are abundant
conformal transformations. It is a well-known fact that any
holomorphic or anti-holomorphic functions are conformal if
$f^\prime(z) \neq 0$. We can prove this by use of Theorem
\ref{final}. For this we denote $\frac{\partial^2}{\partial x^2} +
\frac{\partial^2}{\partial y^2}$ by $\triangle$ and
$\frac{\partial^2}{\partial u^2} + \frac{\partial^2}{\partial
v^2}$ by $\triangle^2$.

\begin{thm}
Let $U$ be an open subset of $\mathbb{R}^2$ and $F : U \rightarrow
\mathbb{R}^2$ be a $C^2$ conformal map given by $(u,v) = F(x,y)$.
Then, $F$, as a function of the complex number $z$, is holomorphic
or anti-holomorphic.
\end{thm}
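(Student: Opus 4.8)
The plan is to invoke Theorem~\ref{final} in the Euclidean case $\nu = 0$, where the gradient-length hypothesis is vacuous, so that the conformality of $F$ is \emph{equivalent} to the assertion that $\triangle \varphi = 0$ if and only if $\triangle^2 \varphi = 0$ for every $C^2$ function $\varphi$. I would then apply this criterion to a short list of explicit test functions built from $u$ and $v$, each of which is patently $\triangle^2$-harmonic, and read off the Cauchy--Riemann equations up to a sign.

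Concretely: putting $\varphi = u$ and $\varphi = v$ gives $\triangle^2 u = \triangle^2 v = 0$, hence $\triangle u = \triangle v = 0$, i.e.\ both components of $F$ are harmonic in $(x,y)$. Putting $\varphi = uv$ gives $\triangle^2(uv) = 0$, hence $0 = \triangle(uv) = v\,\triangle u + u\,\triangle v + 2(u_x v_x + u_y v_y) = 2(u_x v_x + u_y v_y)$, so $\nabla u$ and $\nabla v$ are orthogonal. Putting $\varphi = u^2 - v^2$ gives $\triangle^2(u^2 - v^2) = 0$, hence $0 = \triangle(u^2 - v^2) = 2|\nabla u|^2 - 2|\nabla v|^2$, so $|\nabla u| = |\nabla v|$. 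Since $F$ is a diffeomorphism its Jacobian is everywhere invertible, so this common value is nonzero; orthogonality in the plane then forces $(v_x, v_y) = t\,(-u_y, u_x)$ with $t$ a scalar, and the equal-length relation forces $t^2 = 1$, hence $t = \pm 1$ at each point.

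It remains to promote this pointwise alternative to a global one. The function $t = (u_x v_y - u_y v_x)/(u_x^2 + u_y^2)$ is continuous on $U$ (the denominator never vanishes) and takes only the values $\pm 1$, so it is locally constant. On a connected component where $t \equiv 1$ one has $v_y = u_x$ and $v_x = -u_y$, the Cauchy--Riemann equations, so $F$ is holomorphic there; on a component where $t \equiv -1$ one has $v_y = -u_x$ and $v_x = u_y$, so $F$ is anti-holomorphic there. Under the implicit assumption that $U$ is connected, exactly one case occurs, which is the assertion.

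The four Laplacian identities are routine, and all the substantive analysis is already absorbed into Theorem~\ref{final}; the only place demanding any care is the final connectedness argument, where one must note that $t$ is genuinely well defined and continuous — this is precisely where the diffeomorphism hypothesis (nonvanishing Jacobian, equivalently $\nabla u \neq 0$) enters — and where "holomorphic or anti-holomorphic" is to be read componentwise on $U$. I anticipate no obstacle beyond this bookkeeping.
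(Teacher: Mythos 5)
Your proposal is correct and follows essentially the same route as the paper: invoke Theorem~\ref{final} (with the gradient condition vacuous for $\nu=0$) and test the Laplacian identity on $\varphi = u$, $v$, $uv$, and $u^2 - v^2$ to extract harmonicity, orthogonality of gradients, and equality of gradient lengths, hence the Cauchy--Riemann equations up to sign. Your closing argument that the sign $t$ is continuous, $\pm 1$-valued, and hence locally constant is a small but genuine improvement in rigor over the paper, which passes from the pointwise alternative $\frac{\partial u}{\partial x} = \pm \frac{\partial v}{\partial y}$ to the global dichotomy without comment.
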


\begin{proof}
By Theorem \ref{final}, it is sufficient to find a diffeomorphism
$(x,y) \mapsto (u,v)$ that satisfies $\triangle \varphi = 0$ if
and only if $\triangle^\prime = 0$ for any $C^2$ function
$\varphi$. By chain rule, we have
\begin{eqnarray*}
\triangle \varphi &=& \frac{\partial^2 \varphi}{\partial u^2}
\Big[ \Big(\frac{\partial u}{\partial x}\Big)^2 +
\Big(\frac{\partial u}{\partial y}\Big)^2 \Big] + \frac{\partial^2
\varphi}{\partial v^2} \Big[ \Big(\frac{\partial v}{\partial
x}\Big)^2 + \Big(\frac{\partial v}{\partial y}\Big)^2 \Big]\\
&+& 2\frac{\partial^2 \varphi}{\partial u \partial v} \Big[
\frac{\partial u}{\partial x} \frac{\partial v}{\partial x} +
\frac{\partial y}{\partial y} \frac{\partial v}{\partial y} \Big]
+ \frac{\partial \varphi}{\partial u} \Big[ \frac{\partial ^2
u}{\partial x^2} + \frac{\partial^2 u}{\partial y^2} \Big] \\
&+& \frac{\partial \varphi}{\partial v} \Big[ \frac{\partial^2
v}{\partial x^2}+\frac{\partial^2 v}{\partial y^2} \Big].
\end{eqnarray*}

If we let $\varphi = u$ and $\varphi = v$, then since
$\triangle^\prime u = \triangle^\prime v =0$, we have
$\frac{\partial^2 u}{\partial x^2} + \frac{\partial u}{\partial
y^2} = 0$ and $\frac{\partial^2 v}{\partial x^2} + \frac{\partial
v}{\partial y^2} = 0$ and thus $u$ and $v$ are analytic. \\
If we substitute $\varphi = u^2 - v^2$, we obtain
$\Big(\frac{\partial u}{\partial x}\Big)^2 + \Big(\frac{\partial
u}{\partial y}\Big)^2 = \Big(\frac{\partial v}{\partial x}\Big)^2
+ \Big(\frac{\partial v}{\partial y}\Big)^2$. By substituting
$\varphi = uv$, we obtain $\frac{\partial u}{\partial
x}\frac{\partial v}{\partial x} + \frac{\partial u}{\partial
y}\frac{\partial v}{\partial y}=0$. Multiplying both sides of
$\Big(\frac{\partial u}{\partial x}\Big)^2 + \Big(\frac{\partial
u}{\partial y}\Big)^2 = \Big(\frac{\partial v}{\partial x}\Big)^2
+ \Big(\frac{\partial v}{\partial y}\Big)^2$ by
$\Big(\frac{\partial u}{\partial y}\Big)^2$ and using
$\frac{\partial u}{\partial x}\frac{\partial v}{\partial x} +
\frac{\partial u}{\partial y}\frac{\partial v}{\partial y}=0$, we
have $\frac{\partial u}{\partial x} = \pm \frac{\partial
v}{\partial y}$ and $\frac{\partial u}{\partial y} = \mp
\frac{\partial v}{\partial x}$. Therefore, the map $(x,y) \mapsto
(u,v)$ is either holomorphic or anti-holomorphic.

\end{proof}

It is a well-known fact that real and imaginary part of
holomorphic or anti-holomorphic functions are analytic. Even
without this, we actually have shown that $u$ and $v$ are analytic
in the above proof. Therefore, though we assumed that $F$ is $C^2$
in the above theorem, we actually obtained all conformal maps on
$\mathbb{R}^2$.

We now consider conformal transformation on $\mathbb{R}^2_1$, the
two-dimensional Minkowski space with metric signature $\eta = (1,
-1)$.

Let $F : \mathbb{R}^2_1 \rightarrow \mathbb{R}^2_1$ be a $C^2$
diffeomorphism given by $(X, T) = F(x, t)$. We denote
$\frac{\partial^2}{\partial x^2}- \frac{\partial^2}{\partial t^2}$
by $\triangle$ and $\frac{\partial^2}{\partial X^2} -
\frac{\partial^2}{\partial T^2}$ by $\triangle^\prime$. Then we
have the following.

\begin{thm} \label{final-2}
Let $F : U \subset \mathbb{R}^2_1 \rightarrow \mathbb{R}^2_1$ be a
$C^2$ conformal transformation. Then, there exist two
diffeomorphisms $\chi$ and $\psi$ defined on open subsets of
$\mathbb{R}$ such that if $\chi^\prime \cdot \psi^\prime > 0 $,
then  $$ F(x,t) = \Big( \chi(x+t) + \psi(x-t), -\chi(x+t) +
\psi(x-t) \Big), $$ and if $\chi^\prime \cdot \psi^\prime < 0$,
then $$F(x,t) = \Big( \chi(x+t)-\psi(x-t), \chi(x+t) + \psi(x-t)
\Big).$$
\end{thm}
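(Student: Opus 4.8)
The plan is to use Theorem \ref{main2} to replace the conformality of $F = (X,T)$ by a short list of partial differential identities, to integrate these via d'Alembert's solution of the one-dimensional wave equation, and to read off the two normal forms from the resulting functional equations. Since $F$ is a $C^2$ conformal transformation of an open subset of $\mathbb{R}^2_1$, Theorem \ref{main2} tells us that $\triangle\varphi = 0$ if and only if $\triangle'\varphi = 0$ for every $C^2$ function $\varphi$, and that $\eta(\nabla X, \nabla X) > \eta(\nabla T, \nabla T)$. Taking $\varphi = X$ and $\varphi = T$ (for which $\triangle'\varphi = 0$ trivially) gives $\triangle X = \triangle T = 0$, i.e. $X$ and $T$ satisfy $\partial_x^2(\cdot) = \partial_t^2(\cdot)$. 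Taking $\varphi = XT$ and $\varphi = X^2 + T^2$ (both annihilated by $\triangle'$) and expanding $\triangle$ with the help of $\triangle X = \triangle T = 0$ yields
\begin{equation*}
\partial_x X\,\partial_x T - \partial_t X\,\partial_t T = 0, \qquad \eta(\nabla X, \nabla X) + \eta(\nabla T, \nabla T) = 0 ,
\end{equation*}
and the latter together with the inequality above forces $\eta(\nabla X, \nabla X) > 0 > \eta(\nabla T, \nabla T)$.

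Next I would pass to the null coordinates $s = x+t$, $r = x-t$, in which $\triangle = 4\,\partial_s\partial_r$. Because the statement is local in nature, I may shrink $U$ so that $s$ and $r$ vary independently over open intervals; then $\triangle X = \triangle T = 0$ gives, by d'Alembert, $X = a(s) + b(r)$ and $T = c(s) + d(r)$ with $a,b,c,d$ of class $C^2$. Differentiating and substituting into the two identities above turns them into the functional equations
\begin{equation*}
a'(s)\,d'(r) + b'(r)\,c'(s) = 0, \qquad a'(s)\,b'(r) + c'(s)\,d'(r) = 0 ,
\end{equation*}
while $\eta(\nabla X, \nabla X) = 4\,a'(s)\,b'(r) > 0$ and $\det DF = -4\,a'(s)\,d'(r) \neq 0$ (the last because $F$ is a diffeomorphism); hence $a', b', c', d'$ vanish nowhere.

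The heart of the argument is solving these functional equations. Dividing the second one by the nowhere-zero $a'(s)\,b'(r)$ gives $\bigl(c'(s)/a'(s)\bigr)\bigl(d'(r)/b'(r)\bigr) = -1$; a function of $s$ times a function of $r$ can be constant only if each factor is constant, so $c' = \mu\,a'$ and $d' = -\mu^{-1}\,b'$ for some constant $\mu \neq 0$. Feeding this into the first functional equation gives $(\mu - \mu^{-1})\,a'(s)\,b'(r) = 0$, so $\mu = \pm 1$. Integrating, $c = \varepsilon\,a + \text{const}$ and $d = -\varepsilon\,b + \text{const}$ with $\varepsilon = \mu$; setting $\chi$ and $\psi$ equal to $a$ and $b$ adjusted by half of the leftover additive constant, one checks that
\begin{equation*}
F(x,t) = \bigl( \chi(x+t) + \psi(x-t),\ -\chi(x+t) + \psi(x-t) \bigr) \quad\text{if } \varepsilon = -1 ,
\end{equation*}
and
\begin{equation*}
F(x,t) = \bigl( \chi(x+t) - \psi(x-t),\ \chi(x+t) + \psi(x-t) \bigr) \quad\text{if } \varepsilon = 1 ,
\end{equation*}
with $\chi'\psi' = a'b' > 0$ in the first case and $\chi'\psi' = -a'b' < 0$ in the second. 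Since $a$ and $b$ are $C^2$ with nowhere-vanishing derivative, $\chi$ and $\psi$ are diffeomorphisms of open subsets of $\mathbb{R}$.

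The step I expect to be the main obstacle is exactly this last functional-equation analysis: one must keep straight which of $a', b', c', d'$ is a function of $s$ and which of $r$, legitimately separate variables (this is where connectedness of $U$ is used, and the reason I would first reduce to $U$ being a product of null-coordinate intervals), and then carry the two sign possibilities $\mu = \pm 1$ and the integration constants through so that the output lands precisely on the two displayed forms — including the signs and the claims $\chi'\psi' > 0$ respectively $\chi'\psi' < 0$. By contrast, the appeal to Theorem \ref{main2}, the evaluation on $\varphi = X,\,T,\,XT,\,X^2 + T^2$, and the use of d'Alembert's formula are all routine.
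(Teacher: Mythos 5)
Your proposal is correct and follows essentially the same route as the paper: invoke the harmonicity-preservation characterization, test it on $\varphi = X, T, XT, X^2+T^2$ to get the wave equations plus the two algebraic identities and the sign condition, integrate by d'Alembert, and solve the resulting functional equations to land on the two normal forms. Your handling of the separation-of-variables step (showing the ratio $c'/a'$ is a genuine constant $\mu$ with $\mu=\pm1$, rather than the paper's ``$=\alpha$ for some positive function $\alpha$'') and your explicit reduction to a product of null-coordinate intervals are slightly more careful than the paper's version of the same argument, but they do not constitute a different approach.
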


\begin{proof}
By Theorem \ref{final}, it is sufficient to find a diffeomorphism
$(x,t) \mapsto (X,T)$ that satisfies $\triangle \varphi = 0$ if
and only if $\triangle^\prime = 0$ for any $C^2$ function
$\varphi$ and $\eta(\nabla X, \nabla X)
> \eta(\nabla T, \nabla T).$

By chain rule, we have
\begin{eqnarray*} \frac{\partial^2
\varphi}{\partial x^2} - \frac{\partial^2 \varphi}{\partial t^2}
&=& \frac{\partial^2 \varphi}{\partial X^2} \Big[ \Big(
\frac{\partial X}{\partial x} \Big)^2 - \Big(\frac{\partial
X}{\partial t} \Big)^2 \Big] + \frac{\partial^2 \varphi}{\partial
T^2} \Big[ \Big( \frac{\partial T}{\partial x} \Big)^2 - \Big(
\frac{\partial T}{\partial t} \Big)^2 \Big]\\
& & \,\,\,\, + 2 \frac{\partial^2 \varphi}{\partial X
\partial T} \Big[ \frac{\partial X}{\partial x} \frac{\partial
T}{\partial x} - \frac{\partial X}{\partial t} \frac{\partial
T}{\partial t} \Big] + \frac{\partial \varphi}{\partial X} \Big[
\frac{\partial^2 X}{\partial x^2} - \frac{\partial^2 X}{\partial
t^2} \Big]\\ & & \,\,\, + \frac{\partial \varphi}{\partial T}
\Big[ \frac{\partial^2 T}{\partial x^2} - \frac{\partial^2
T}{\partial t^2} \Big] \,\,\, \cdots (*).
\end{eqnarray*}

By substituting $\varphi = X$ and $\varphi = T$ into the above
equation, we have $\frac{\partial^2 X}{\partial x^2} -
\frac{\partial^2 X}{\partial t^2} = 0$ and $\frac{\partial^2
T}{\partial x^2} - \frac{\partial^2 T}{\partial t^2} = 0$. In
other words, $X$ and $T$ are solutions of wave equations.
Therefore, there are $f$, $g$, $k$ and $h$ such that
$$ \,\, \,\,\,\,\,\, X = f(x+t) + g(x-t), and $$
$$ T = k(x+t) + h(x-t).$$

To find the explicit form, we substitute this into $\eta(\nabla X,
\nabla X) > \eta(\nabla T, \nabla T).$ Then, we obtain $$f^\prime
g^\prime > k^\prime h^\prime \,\,\,\,\,\, \cdots (1).$$

By substituting $\varphi = XT$ into $(*)$, we get $\frac{\partial
X}{\partial x} \frac{\partial T}{\partial x} - \frac{\partial
X}{\partial t} \frac{\partial T}{\partial t} = 0$, and then we
have
$$ f^\prime h^\prime + g^\prime k^\prime = 0 \,\,\,\,\,\, \cdots
(2).$$

By substituting $\varphi = X^2 + T^2$ into $(*)$, we get $\Big(
\frac{\partial X}{\partial x} \Big)^2 - \Big( \frac{\partial
X}{\partial t} \Big)^2 = - \Big[ \Big( \frac{\partial T}{\partial
x} \Big)^2 - \Big( \frac{\partial T}{\partial t} \Big)^2 \Big]$
and then, we have
$$ f^\prime g^\prime + k^\prime h^\prime = 0 \,\,\,\,\,\, \cdots (3).$$

By combining (1) and (3), we can conclude that
$$ f^\prime g^\prime >0 \,\,\, \mbox{and} \,\,\, k^\prime h^\prime
< 0 \,\,\,\,\,\, \cdots(4).$$

From this, we can see that each of $f$, $g$, $k$ and $h$ is a
diffeomorphism defined on an open subset of $\mathbb{R}$.

From equation $(2)$, we have $\frac{f^\prime}{g^\prime} = -
\frac{k^\prime}{h^\prime} = \alpha$ for some positive function
$\alpha$. If we substitute this into equation $(3)$, we obtain
$g^\prime = \pm h^\prime$ and $f^\prime = \mp k^\prime$.

Assume that $g^\prime = h^\prime$ and $f^\prime = -k^\prime$.\\
 In this case, we have $g = h + c$ and $k = -f + d$ for some
 constants $c$ and $d$ and finally, we have
 \begin{eqnarray*}
 X &=& f(x+t) + h(x-t) +c, \,\,\,\, \mbox{and} \\
  T &=& -f(x+t) + h(x-t) + d.
  \end{eqnarray*}
If we let $\chi(u) = f(u) + \frac{1}{2}(c-d)$ and $\psi(v) = h(v)
+ \frac{1}{2}(c+d)$, then we have
$$ X = \chi(x+t) + \psi(x-t) \,\,\, \mbox{and} \,\,\, T =
-\chi(x+t) + \psi(x-t),$$ where $\chi$ and $\psi$ are
diffeomorphisms. Note that since $g^\prime = h^\prime$ and
$f^\prime = -k^\prime$, by $(4)$, either both $\chi$ and $\psi$
are increasing or both are decreasing.

We now assume that $g^\prime = -h^\prime$ and $f^\prime =
k^\prime$.

Then, we have $g = -h + c$ and $f = k + d$ for some constants $c$
and $d$ and thus, we have
 \begin{eqnarray*}
 X &=& f(x+t) - h(x-t) + c, \,\,\,\, \mbox{and} \\
  T &=& f(x+t) + h(x-t) - d.
  \end{eqnarray*}
If we let $\chi(u) = f(u) + \frac{1}{2}(c-d)$ and $\psi(v) = h(v)
- \frac{1}{2}(c+d)$, then we have
$$ X = \chi(x+t) - \psi(x-t) \,\,\, \mbox{and} \,\,\, T =
\chi(x+t) + \psi(x-t).$$ Note also that, in this case, either
$\chi$ is increasing and $\psi$ is decreasing or $\chi$ is
decreasing and $\psi$ is increasing.

\end{proof}

In $\mathbb{R}^2_1$ with coordinate $(x,t)$, if we introduce a
null coordinate $u = x + t$ and $v = x - t$, the above form can be
simplified.

\begin{thm} \label{null}
Let $F : U \subset \mathbb{R}^2_1 \rightarrow \mathbb{R}^2_1$ be a
$C^2$ conformal transformation given by $(U,V) = F(u,v)$ in terms
of null coordinates. Then there are two diffeomorphisms $\chi$ and
$\psi$ defined on open subsets on $\mathbb{R}$, which are either
both increasing or both decreasing, such that $F$ is given by
either $F(u,v) = ( \psi(u), \chi(v) )$ or $F(u,v) = ( \psi(v),
\chi(u))$
\end{thm}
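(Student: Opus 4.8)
The plan is to derive Theorem~\ref{null} directly from Theorem~\ref{final-2} by passing to null coordinates through a fixed linear substitution. Recall that the null coordinates of a point with Cartesian coordinates $(x,t)$ in $\mathbb{R}^2_1$ are $u=x+t$, $v=x-t$, and likewise the image point $(X,T)=F(x,t)$ has null coordinates $U=X+T$, $V=X-T$. Hence the expression $(U,V)=F(u,v)$ of $F$ in null coordinates is obtained from the Cartesian expression of Theorem~\ref{final-2} simply by reading off $x+t=u$, $x-t=v$ and $U=X+T$, $V=X-T$. The diffeomorphisms $\chi,\psi$ produced by Theorem~\ref{final-2} are already defined on open subsets of $\mathbb{R}$ (the images of $U$ under $(x,t)\mapsto x+t$ and $(x,t)\mapsto x-t$), so no new domain issue arises. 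I would then split into the two cases of Theorem~\ref{final-2}.

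In the case $\chi'\cdot\psi'>0$, Theorem~\ref{final-2} gives $X=\chi(x+t)+\psi(x-t)$ and $T=-\chi(x+t)+\psi(x-t)$, whence $U=X+T=2\psi(v)$ and $V=X-T=2\chi(u)$. Replacing $\chi,\psi$ by $2\chi,2\psi$ --- still diffeomorphisms of the same open intervals, with the same monotonicity --- gives $F(u,v)=(\psi(v),\chi(u))$, which is one of the two asserted forms; and since $\chi'$ and $\psi'$ have the same sign, $\chi$ and $\psi$ are either both increasing or both decreasing. In the case $\chi'\cdot\psi'<0$, Theorem~\ref{final-2} gives $X=\chi(x+t)-\psi(x-t)$ and $T=\chi(x+t)+\psi(x-t)$, whence $U=X+T=2\chi(u)$ and $V=X-T=-2\psi(v)$. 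Setting $\widetilde\chi=2\chi$ and $\widetilde\psi=-2\psi$, this reads $F(u,v)=(\widetilde\chi(u),\widetilde\psi(v))$, the other asserted form (after relabelling the two diffeomorphisms), and now $\widetilde\chi'\cdot\widetilde\psi'=-4\,\chi'\cdot\psi'>0$, so $\widetilde\chi$ and $\widetilde\psi$ are again either both increasing or both decreasing. Since these are the only two cases, $F$ has one of the two stated forms with co-monotone factors.

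I expect no genuine obstacle here: the argument is just bookkeeping of the two cases of Theorem~\ref{final-2} under one linear change of variables. The only point that needs a little care is tracking the sign of the derivatives through the harmless rescaling by $2$ and, in the second case, through the sign flip $\psi\mapsto-2\psi$, so as to confirm that the two factors end up co-monotone; and noting that multiplying a one-variable diffeomorphism by a nonzero constant is again a diffeomorphism of the corresponding open interval whose monotonicity is unchanged (respectively reversed) according to the sign of the constant.
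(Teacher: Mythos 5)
Your proposal is correct and follows essentially the same route as the paper: both proofs take the two cases of Theorem~\ref{final-2}, compute $U=X+T$ and $V=X-T$ to get $(2\psi(v),2\chi(u))$ in the first case and $(2\chi(u),-2\psi(v))$ in the second, and then absorb the constants $2$ and $-2$ into relabelled diffeomorphisms while checking that the two factors remain co-monotone. The only difference is that you spell out the sign-tracking that the paper leaves as ``it is easy to see.''
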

\begin{proof}
In the previous theorem, if $\chi^\prime \cdot \psi^\prime > 0$,
then it is easy to see that $F$ is given by $F(u,v) = \big(
2\psi(v), 2\chi(u) \big)$. If we replace $2\psi$ and $2\chi$ by
$\psi$ and $\chi$, we have $F(u,v) = \big(\psi(v), \chi(u) \big)$.

Likewise, if $\chi^\prime \cdot \psi^\prime < 0$, then $F$ is
given by $F(u,v) = \big( 2\chi(u), -2\psi(v) \big)$. If we replace
$2\chi$ and $-2\psi$ by $\chi$ and $\psi$, then they have the same
increasing pattern and $F$ is given by $F(u,v) = \big( \chi(u),
\psi(v) \big)$.
\end{proof}

The domain of definition of $\chi$ and $\psi$ depend on the
geometry of $U$ and the geometry of $F(U)$ depends on the range of
$\chi$ and $\psi$. If we take $\chi$ and $\psi$ to be  bijective
diffeomorphisms defined on the whole of $\mathbb{R}$, then we can
get the general form of conformal equivalences from
$\mathbb{R}^2_1$ onto $\mathbb{R}^2_1$ and so we have the
following.

\begin{corollary}
Let $F : \mathbb{R}^2_1 \rightarrow \mathbb{R}^2_1$ be a
surjective diffeomorphism. The necessary and sufficient condition
for $F$ to be conformal is that there exist surjective
diffeomorphisms $\psi$ and $\chi$ which are either both increasing
or both decreasing such that $F(u,v) = \big( \psi(u), \chi(v)
\big)$ or $F(u,v) = \big( \psi(v), \chi(u) \big)$.
\end{corollary}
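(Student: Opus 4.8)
The plan is to derive the corollary from Theorem \ref{null} together with one short computation of the pull-back metric in null coordinates.

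For the sufficiency direction, I would work in the null coordinates $u = x+t$, $v = x-t$, in which the Minkowski line element $dx^2 - dt^2$ becomes $du\,dv$ (the symmetric product). Given $F(u,v) = (\psi(u), \chi(v))$ with $\psi, \chi$ surjective diffeomorphisms of $\mathbb{R}$ that are either both increasing or both decreasing, one computes that the pull-back of the target line element $dU\,dV$ under $F$ is $d(\psi(u))\,d(\chi(v)) = \psi'(u)\,\chi'(v)\,du\,dv$; here $\psi'$ and $\chi'$ are nowhere zero and have the same sign, so $\psi'(u)\chi'(v) > 0$ everywhere, and $F$ multiplies the metric by this positive function. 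Hence $F$ is conformal, and it is visibly a surjective diffeomorphism, so the extra hypothesis in the corollary is automatically met. The case $F(u,v) = (\psi(v), \chi(u))$ is identical after interchanging the two factors, giving pull-back metric $\psi'(v)\chi'(u)\,du\,dv$ with the same positive conformal factor.

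For the necessity direction, I would apply Theorem \ref{null} with $U = \mathbb{R}^2_1$: the conformal map $F$ has the form $F(u,v) = (\psi(u), \chi(v))$ or $F(u,v) = (\psi(v), \chi(u))$ for diffeomorphisms $\psi, \chi$ defined on open subsets of $\mathbb{R}$ that are either both increasing or both decreasing, and this monotonicity clause is exactly the one demanded in the corollary, so it is inherited with no extra work. It then remains only to see that $\psi$ and $\chi$ are globally defined and surjective. Since $F$ is defined on all of $\mathbb{R}^2_1$ and, in the first form, its first null output coordinate is $\psi(u)$ with $u$ ranging over all of $\mathbb{R}$, the domain of $\psi$ must be all of $\mathbb{R}$, and likewise for $\chi$; the second form is handled symmetrically. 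Finally, if some $a \in \mathbb{R}$ were not in the image of $\psi$, then the point with null coordinates $(a,0)$ would lie outside $F(\mathbb{R}^2_1)$, contradicting surjectivity of $F$; thus $\psi$, and by the same argument $\chi$, is a surjective diffeomorphism of $\mathbb{R}$. Combining the two directions gives the corollary.

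The argument is essentially bookkeeping, and the one point that needs care is the necessity step: one must track, in each of the two cases furnished by Theorem \ref{null}, which output null coordinate depends on $u$ and which on $v$, and then use the fact that $u$ and $v$ vary independently over $\mathbb{R}$ to conclude that each one-variable diffeomorphism is defined on and maps onto all of $\mathbb{R}$. If one wishes to assume only that $F$ is a $C^1$ (rather than $C^2$) diffeomorphism, one should first invoke the regularity improvement underlying Liouville's theorem to legitimize the application of Theorem \ref{null}.
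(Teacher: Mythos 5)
Your proposal is correct and follows the same route the paper takes: the paper states this corollary without a formal proof, justifying it only by the remark that taking $\chi$ and $\psi$ in Theorem \ref{null} to be bijective diffeomorphisms defined on all of $\mathbb{R}$ yields the general conformal equivalence of $\mathbb{R}^2_1$ onto itself. Your write-up simply makes that one-line justification explicit (the pull-back computation $\psi'(u)\chi'(v)\,du\,dv$ for sufficiency, and the domain/surjectivity bookkeeping for necessity), and it is sound.
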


\section{Counterparts to the Riemann mapping theorem}

As we have seen in previous sections, Euclidean 2-space and
Minkowski 2-space have abundant conformal transformations compared
to higher dimensional spaces. Concerned with conformal structure
of $\mathbb{R}^2$, we have the famous theorem, called the Riemann
mapping theorem as the following.

\begin{thm}
Let $U$ be an open, simply-connected, proper subset of
$\mathbb{R}^2$. Then, there exits a conformal equivalence from $U$
onto the open unit disk $D$.
\end{thm}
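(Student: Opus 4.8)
This is the classical Riemann mapping theorem, and by the theorem characterizing conformal maps on $\mathbb{R}^2$, a conformal equivalence onto $D$ is exactly a holomorphic bijection onto $D$ (an anti-holomorphic one becomes holomorphic after post-composing with $z \mapsto \bar z$, which preserves $D$); so the plan is to produce a biholomorphism $f : U \to D$ by the standard normal-families argument. First I would fix a base point $z_0 \in U$ and show that the family $\mathcal{F}$ of all injective holomorphic maps $g : U \to D$ with $g(z_0) = 0$ is non-empty. Since $U$ is a proper subset there is $a \in \mathbb{C} \setminus U$; the non-vanishing function $z \mapsto z - a$ on the simply-connected set $U$ admits a holomorphic square root $h$, which is injective and satisfies $h(U) \cap (-h(U)) = \emptyset$. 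As $h(U)$ is open it contains some disk $B(w_0,r)$, so $h(U)$ avoids $B(-w_0, r/2)$, and a suitable M\"obius transformation therefore maps $h(U)$ into $D$; composing with an automorphism of $D$ sending the image of $z_0$ to $0$ produces an element of $\mathcal{F}$.

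Next I would solve the extremal problem $s := \sup_{g \in \mathcal{F}} |g'(z_0)|$. The family $\mathcal{F}$ is uniformly bounded by $1$, hence normal by Montel's theorem, and $0 < s < \infty$. Choosing $g_n \in \mathcal{F}$ with $|g_n'(z_0)| \to s$ and passing to a locally uniformly convergent subsequence gives a holomorphic limit $f$ with $f(z_0) = 0$ and $|f'(z_0)| = s > 0$; in particular $f$ is non-constant, so by Hurwitz's theorem $f$ is injective, and by the open mapping theorem $f(U) \subseteq D$. Hence $f \in \mathcal{F}$ realizes the supremum.

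The main obstacle is to show this extremal $f$ is onto. Suppose not, and pick $\omega \in D \setminus f(U)$; write $\psi_c(w) = (w - c)/(1 - \bar c\, w)$ for the automorphism of $D$ with $\psi_c(c) = 0$, noting $\psi_c^{-1} = \psi_{-c}$. Then $\psi_\omega \circ f$ maps $U$ into the simply-connected set $D \setminus \{0\}$, so it has a holomorphic square root $q : U \to D$; put $g := \psi_{q(z_0)} \circ q$, which lies in $\mathcal{F}$, and $\Phi := \psi_\omega^{-1} \circ (w \mapsto w^2) \circ \psi_{q(z_0)}^{-1}$. A direct check gives $f = \Phi \circ g$, that $\Phi : D \to D$ is holomorphic with $\Phi(0) = 0$, and that $\Phi$ is not a rotation because the squaring map is two-to-one. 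The Schwarz lemma then yields $|\Phi'(0)| < 1$, so $|g'(z_0)| = |f'(z_0)| / |\Phi'(0)| > s$, contradicting the maximality of $s$. Therefore $f(U) = D$, and $f : U \to D$ is the sought conformal equivalence. The steps needing genuine care are the square-root and Schwarz-lemma bookkeeping in this last argument and the clean invocations of Montel's and Hurwitz's theorems; the rest is routine.
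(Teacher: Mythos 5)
The paper does not prove this statement at all: it is quoted as the classical Riemann mapping theorem and used only as a foil for the Minkowski-plane results, so there is no internal argument to compare yours against. Your proposal is the standard and essentially complete normal-families proof (non-emptiness of the competing family via a square root of $z-a$ and a M\"obius map, Montel plus Hurwitz to extract an injective extremizer, and the Koebe square-root trick with the Schwarz lemma to force surjectivity), and the reduction of ``conformal equivalence'' to ``biholomorphism'' via the paper's earlier holomorphic/anti-holomorphic characterization is the right bridge to the paper's terminology. One wording slip worth fixing: in the surjectivity step you justify the square root of $\psi_\omega\circ f$ by calling $D\setminus\{0\}$ simply connected, which it is not; the correct reason --- the one you already used when showing $\mathcal{F}\neq\emptyset$ --- is that $\psi_\omega\circ f$ is a nowhere-vanishing holomorphic function on the simply connected domain $U$, hence admits a holomorphic square root there. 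With that correction the argument is sound.
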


From the Riemann mapping theorem, we can see that any two
simply-connected, proper subset of $\mathbb{R}^2$ are conformally
equivalent to each other.

 In $\mathbb{R}^2_1$, this theorem does not hold and to see a
difference between $\mathbb{R}^2$ and $\mathbb{R}^2_1$ clearly, we
note that in the Riemann mapping theorem, the condition for $U$ to
be proper is essential since entire function can not be bounded.
In other words, $\mathbb{R}^2$ is not conformally equivalent to
any bounded subset of $\mathbb{R}^2$. However, $\mathbb{R}^2_1$ is
conformally equivalent to a bounded subset $D_M = \{(x,t) | |x| +
|t| < 1 \}$ as the following theorem shows.

\begin{thm} \label{confom-1}
There exists a conformal equivalence from $\mathbb{R}^2_1$ onto
$D_M$.
\end{thm}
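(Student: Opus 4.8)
The plan is to pass to the null coordinates $u=x+t$, $v=x-t$, in which both $\mathbb{R}^2_1$ and $D_M$ become very simple, and then to build the map componentwise as in Theorem~\ref{null}.

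First I would rewrite $D_M$ in null coordinates. From $x=\tfrac12(u+v)$ and $t=\tfrac12(u-v)$ one computes
\[
|x|+|t|=\tfrac12\big(|u+v|+|u-v|\big)=\max\{|u|,|v|\}.
\]
Hence in the coordinates $(u,v)$ the set $D_M$ is precisely the open square $Q=(-1,1)\times(-1,1)$, whereas $\mathbb{R}^2_1$ is the whole $(u,v)$-plane, carrying the conformal class of $\eta=du\,dv$. So it suffices to produce a conformal equivalence from the $(u,v)$-plane onto $Q$.

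Next I would fix a smooth increasing bijection $h\colon\mathbb{R}\to(-1,1)$ — for instance $h(s)=\tfrac{2}{\pi}\arctan s$, or $\tanh s$, or $s/\sqrt{1+s^{2}}$ — and set, in null coordinates,
\[
F\colon (u,v)\longmapsto (U,V)=\big(h(u),\,h(v)\big).
\]
This is a $C^\infty$ diffeomorphism from the $(u,v)$-plane onto $Q$, and it has the form of Theorem~\ref{null} with $\chi=\psi=h$ both increasing. Conformality is then immediate: writing the metric on the target again in null coordinates as $dU\,dV$, one has
\[
F^{*}(dU\,dV)=d\big(h(u)\big)\,d\big(h(v)\big)=h'(u)\,h'(v)\,du\,dv=h'(u)\,h'(v)\,\eta,
\]
with $h'(u)h'(v)>0$ everywhere, so $F$ pulls $\eta$ back to a positive conformal multiple of $\eta$. (Equivalently one may check the criterion of Theorem~\ref{final}: in null coordinates $\triangle$ is a nonzero constant multiple of $\partial_u\partial_v$, and because $u$ and $v$ separate, $\partial_u\partial_v\varphi=h'(u)h'(v)\,\partial_U\partial_V\varphi$, so $\triangle\varphi=0\Leftrightarrow\triangle^\prime\varphi=0$; the inequality $\eta(\nabla X,\nabla X)>\eta(\nabla T,\nabla T)$ holds since $h'>0$.)

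Finally, transporting $F$ back through $x=\tfrac12(u+v)$, $t=\tfrac12(u-v)$ yields the explicit conformal equivalence $\mathbb{R}^2_1\to D_M$,
\[
F(x,t)=\Big(\tfrac12\big(h(x+t)+h(x-t)\big),\ \tfrac12\big(h(x+t)-h(x-t)\big)\Big).
\]
I do not expect a real obstacle. The one substantive point is the observation that $D_M$ is a square in null coordinates; once that is noted, the statement drops out of the classification of conformal transformations on $\mathbb{R}^2_1$ established above. The only thing needing a moment's care is that the chosen $h$ really is a $C^2$ (indeed $C^\infty$) diffeomorphism onto $(-1,1)$ with $h'>0$ throughout, so that the hypotheses used above are satisfied — and this is clear for the explicit choices listed.
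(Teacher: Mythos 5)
Your proof is correct and follows essentially the same route as the paper: the paper likewise takes $\chi=\psi=\tfrac{2}{\pi}\tan^{-1}$ and invokes Theorem~\ref{null} for the map $(u,v)\mapsto(\chi(u),\psi(v))$ in null coordinates. You merely supply the detail the paper leaves implicit, namely that $|x|+|t|=\max\{|u|,|v|\}$ so that $D_M$ is the open square $(-1,1)^2$ in null coordinates.
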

\begin{proof}
Let $\chi(x) = \psi(x) = \frac{2}{\pi} \tan^{-1}x$. Then, by
Theorem \ref{null}, the map $F(u,v) = \big( \chi(u), \psi(v)
\big)$ defined in terms of null coordinates, is the desired
conformal equivalence.

\end{proof}

From this theorem, we can state the following theorem similar to
Riemann mapping theorem.

\begin{thm}
Any open subset of $\mathbb{R}^2_1$ is conformally equivalent to a
subset of $D_M$.
\end{thm}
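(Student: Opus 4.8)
The plan is to obtain this as an immediate consequence of Theorem \ref{confom-1}. Let $F : \mathbb{R}^2_1 \to D_M$ denote the conformal equivalence produced there, namely $F(u,v) = \big( \tfrac{2}{\pi}\tan^{-1}u,\ \tfrac{2}{\pi}\tan^{-1}v \big)$ in null coordinates, which is a bijective conformal diffeomorphism of $\mathbb{R}^2_1$ onto $D_M$. Given an arbitrary open subset $U \subset \mathbb{R}^2_1$, I would simply restrict $F$ to $U$ and show that $F|_U$ is a conformal equivalence of $U$ onto a subset of $D_M$.

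First I would check that $F|_U : U \to F(U)$ is again a conformal transformation in the sense of Section \ref{section:2}. This is the only point requiring any care, and it is routine: conformality is a pointwise condition on the differential, so it is inherited by restriction to $U$; and since $F$ is a diffeomorphism of $\mathbb{R}^2_1$, its restriction to the open set $U$ is a diffeomorphism onto the open set $F(U)$. Hence $F|_U$ is a conformal diffeomorphism which is surjective onto $F(U)$, i.e. a conformal equivalence.

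Next, since $F(\mathbb{R}^2_1) = D_M$, one has $F(U) \subseteq D_M$, and in fact $F(U)$ is an open subset of $D_M$. Therefore $U$ is conformally equivalent to the subset $F(U)$ of $D_M$, which is exactly the assertion of the theorem.

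I do not expect any genuine obstacle here; the substantive content lies entirely in Theorem \ref{confom-1} and, behind it, in the explicit description of conformal maps given by Theorem \ref{null}. It may be worth emphasising in the write-up what the statement does \emph{not} give: unlike the Riemann mapping theorem, the target $F(U)$ is in general only \emph{some} open subset of $D_M$, it cannot be normalised to a single fixed model domain, and two open subsets of $\mathbb{R}^2_1$ need not be conformally equivalent to each other. So this is the natural, and essentially best possible, Lorentzian counterpart of the Riemann mapping theorem.
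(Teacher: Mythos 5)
Your proposal is correct and is essentially identical to the paper's own (one-line) proof: both restrict the conformal equivalence $F : \mathbb{R}^2_1 \rightarrow D_M$ of Theorem \ref{confom-1} to the given open set and note that the image lies in $D_M$. You merely spell out the routine verification that the restriction is again a conformal equivalence, which the paper leaves implicit.
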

\begin{proof}
The image of any open subset under the conformal equivalence
constructed above theorem lies in $D_M$.
\end{proof}

In general, conformal transformation is either a causal
isomorphism or an anti-causal isomorphism and thus, if $D = \{
(x,t) | x^2 + t^2 <1 \}$ and $D_M$ are conformally equivalent to
each other, then they have the same type of causal structure.
However, $D_M$ is globally hyperbolic, but $D$ is not. Therefore,
in $\mathbb{R}^2_1$, $D$ and $D_M$ are not conformally equivalent.

We now analyze the structure of the group of conformal
transformations on a bounded open subset $U$ of $\mathbb{R}^2_1$.
Since $U$ is bounded, we can find $\alpha_i$ and $\beta_i$ for $i
= 1,2 $ such that $\alpha_1 < u < \alpha_2$ and $\beta_1 < v <
\beta_2$ for all $(u,v)$ in $U$, where $u = x+t$ and $v = x-t$.
Let $a_1 = \sup \{ \alpha_1 \}$, $a_2 = \inf \{ \alpha_2 \}$, $b_1
= \sup \{ \beta_1 \}$ and $b_2 = \inf \{ \beta_2 \}$. Then, we
have $U \subset D_U = \{(u,v) \,\, | \,\, a_1 < u < a_2, \,\, b_1
< v < b_2 \}$.

\begin{lemma}
For any bounded open subset $U$ of $\mathbb{R}^2_1$, $D_M$ and
$D_U$ are conformally equivalent.
\end{lemma}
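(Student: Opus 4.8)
The plan is to pass to the null coordinates $u = x+t$, $v = x-t$, in which both $D_M$ and $D_U$ become coordinate rectangles, and then to write down an explicit conformal equivalence of the product form supplied by Theorem \ref{null}.

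First I would identify $D_M$ in null coordinates. Since $x = \tfrac12(u+v)$ and $t = \tfrac12(u-v)$, the elementary identity $|a+b|+|a-b| = 2\max(|a|,|b|)$ gives $|x|+|t| = \tfrac12|u+v| + \tfrac12|u-v| = \max(|u|,|v|)$, so the defining inequality $|x|+|t|<1$ of $D_M$ is equivalent to $|u|<1$ and $|v|<1$; that is, in null coordinates $D_M$ is the open square $(-1,1)\times(-1,1)$. (This is consistent with the conformal equivalence $F(u,v) = \big(\tfrac{2}{\pi}\tan^{-1}u,\ \tfrac{2}{\pi}\tan^{-1}v\big)$ built in the proof of Theorem \ref{confom-1}.) On the other hand $D_U = \{(u,v) : a_1 < u < a_2,\ b_1 < v < b_2\}$ by definition; since $U$ is bounded the four numbers are finite, and since $U$ is open and nonempty its projections to the $u$- and $v$-axes are nonempty open sets, so $a_1 < a_2$ and $b_1 < b_2$. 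Thus $D_U$ is an honest bounded open rectangle in the $(u,v)$ variables.

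Next I would pick increasing affine diffeomorphisms $\psi \colon (-1,1) \to (a_1,a_2)$ and $\chi \colon (-1,1) \to (b_1,b_2)$, for instance $\psi(u) = a_1 + \tfrac12(a_2-a_1)(u+1)$ and $\chi(v) = b_1 + \tfrac12(b_2-b_1)(v+1)$. These are bijective, smooth, with positive derivative. By Theorem \ref{null} (and the Corollary following it), the map $F(u,v) = (\psi(u),\chi(v))$, read in null coordinates, is a conformal transformation, since $\psi$ and $\chi$ are both increasing; being a product of bijective diffeomorphisms it carries the square $(-1,1)^2$ diffeomorphically onto the rectangle $(a_1,a_2)\times(b_1,b_2)$. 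Hence $F$ is a conformal equivalence from $D_M$ onto $D_U$, which is the assertion of the lemma.

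There is no genuine analytic obstacle here: Theorem \ref{null} (together with its converse, already used in Theorem \ref{confom-1}) does all the work. The only points that need a moment's attention are the coordinate identification of $D_M$ with a square in null coordinates, and the requirement that $\psi$ and $\chi$ be chosen with the \emph{same} monotonicity type so that $(u,v)\mapsto(\psi(u),\chi(v))$ falls in the class that Theorem \ref{null} certifies as conformal — which is automatic once both are taken increasing.
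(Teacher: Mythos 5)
Your proof is correct and follows essentially the same route as the paper: both exhibit increasing affine maps $\psi\colon(-1,1)\to(a_1,a_2)$ and $\chi\colon(-1,1)\to(b_1,b_2)$ (your formula $\psi(u)=a_1+\tfrac12(a_2-a_1)(u+1)$ is exactly the paper's $\tfrac12[(a_2-a_1)u+(a_1+a_2)]$) and invoke Theorem \ref{null} to conclude that $(u,v)\mapsto(\psi(u),\chi(v))$ is a conformal equivalence. Your explicit verification that $D_M$ is the square $(-1,1)^2$ in null coordinates, via $|x|+|t|=\max(|u|,|v|)$, is a detail the paper leaves implicit.
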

\begin{proof}
Let $\psi(u) = \frac{1}{2} \big[ (a_2 - a_1)u + (a_1+a_1) \big]$
and $\chi(v) = \frac{1}{2} \big[ (b_2-b_1)u + (b_1+b_1) \big]$.
Then, $F(u,v) = \big( \psi(u), \chi(v) \big)$ is a conformal
equivalence from $D_M$ onto $D_U$ by Theorem \ref{null}.
\end{proof}

In the above proof, we can see that any two rectangles whose sides
are parallel to the axes of null coordinates, are conformally
equivalent to each other, which is not the case in $\mathbb{R}^2$.
In fact, it is known that there does not exist a conformal map
from a square onto a non-square rectangle which maps the vertices
to the vertices.(See pp. 14-15 in \cite{Lehto}.)

 We know that $\mathbb{R}^2_1$ is conformally equivalent to $D_M$
 and from the above lemma, we can conclude that $D_U$ is
 conformally equivalent to $\mathbb{R}^2_1$. Therefore, we have
 the following.

\begin{corollary}
 The group of conformal transformations from $D_U$ onto $D_U$ is isomorphic to
 that of $\mathbb{R}^2_1$
 \end{corollary}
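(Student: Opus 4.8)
The plan is to exploit the standard fact that conformal equivalences are closed under composition and inversion, so that a single conformal diffeomorphism between two spaces conjugates their groups of conformal self-equivalences onto one another; the corollary is then the instance of this principle attached to the chain $\mathbb{R}^2_1 \cong D_M \cong D_U$ already noted above.

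First I would assemble the relevant conformal equivalence. By Theorem~\ref{confom-1} there is a conformal equivalence $\Phi_1 : \mathbb{R}^2_1 \to D_M$, and by the preceding Lemma there is a conformal equivalence $\Phi_2 : D_M \to D_U$. Since the pullback of a metric conformal to $\eta$ under a conformal map is again conformal to $\eta$, the composite $\Phi := \Phi_2 \circ \Phi_1 : \mathbb{R}^2_1 \to D_U$ is a conformal equivalence, and likewise $\Phi^{-1} : D_U \to \mathbb{R}^2_1$ is one. Note that $\Phi_1$ and $\Phi_2$, being built out of $\tan^{-1}$ and affine maps, are in fact $C^\infty$, so conjugation by $\Phi$ preserves the $C^2$ regularity class in which we are working.

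Next I would introduce the conjugation map. Let $G(\mathbb{R}^2_1)$ and $G(D_U)$ denote the groups, under composition, of $C^2$ conformal equivalences of $\mathbb{R}^2_1$ onto itself and of $D_U$ onto itself, respectively, and define $\Psi : G(\mathbb{R}^2_1) \to G(D_U)$ by $\Psi(g) = \Phi \circ g \circ \Phi^{-1}$. For $g \in G(\mathbb{R}^2_1)$ the map $\Psi(g)$ is a composition of conformal equivalences, hence a conformal equivalence, and it carries $D_U$ bijectively onto $D_U$, so $\Psi$ is well defined. It is a homomorphism since $\Psi(g_1 \circ g_2) = \Phi \circ g_1 \circ g_2 \circ \Phi^{-1} = (\Phi \circ g_1 \circ \Phi^{-1}) \circ (\Phi \circ g_2 \circ \Phi^{-1}) = \Psi(g_1) \circ \Psi(g_2)$, and the assignment $h \mapsto \Phi^{-1} \circ h \circ \Phi$ is a two-sided inverse of $\Psi$; hence $\Psi$ is a group isomorphism.

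There is no substantive obstacle here: the only points that deserve care are the standing fact — implicit already in the paper's use of Theorem~\ref{confom-1} and of the Lemma — that the class of conformal equivalences is closed under composition and inversion, and the convention that ``the group of conformal transformations of $D_U$'' means precisely this group of conformal self-equivalences. Everything else is a formal verification, and one could state the argument once and for all as the remark that conformally equivalent manifolds have isomorphic groups of conformal automorphisms, of which the corollary is the special case $M = \mathbb{R}^2_1$, $N = D_U$.
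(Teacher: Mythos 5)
Your proposal is correct and is essentially the paper's own argument: the paper justifies the corollary simply by noting that $\mathbb{R}^2_1 \cong D_M \cong D_U$ as conformal equivalences (Theorem~\ref{confom-1} plus the Lemma) and invoking the fact that conformally equivalent spaces have isomorphic conformal automorphism groups, which is exactly the conjugation by $\Phi = \Phi_2 \circ \Phi_1$ that you spell out. Your version merely makes explicit the well-definedness, homomorphism, and invertibility checks that the paper leaves implicit.
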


For a given bounded, open subset $U$ of $\mathbb{R}^2_1$, we can
find $D_U$ and, as we have remarked, the domains of definition of
$\psi$ and $\chi$ depend on the geometry of $U$. To be precise, if
$F : (u,v) \mapsto \big( \psi(u), \chi(v) \big)$ is a conformal
equivalence from $U$ onto $U$ itself then, since $\psi$ and $\chi$
has been defined on $\{ u \, | \, (u,v) \in U \,\,\, \mbox{for
some} \,\,\, v \}$ and $\{ v \, | \, (u,v) \in U \,\,\, \mbox{for
some} \,\,\, u \}$, $F$ can be uniquely extended to $D_U$. Since
$\mathbb{R}^2_1$ and $D_U$ are conformally equivalent, we have the
following.

\begin{thm}
For any bounded, open subset $U$ of $\mathbb{R}^2_1$, the group of
conformal equivalences on $U$ is isomorphic to a subgroup of the
group of conformal equivalences on $\mathbb{R}^2_1$.
\end{thm}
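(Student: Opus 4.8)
The plan is to build an injective group homomorphism from the group $G(U)$ of conformal equivalences of $U$ into the group $G(D_U)$ of conformal equivalences of the null-coordinate rectangle $D_U$, and then to invoke the preceding Corollary, which identifies $G(D_U)$ with the group $G(\mathbb{R}^2_1)$ of conformal equivalences of $\mathbb{R}^2_1$. Composing the two, $G(U)$ is realised as a subgroup of $G(\mathbb{R}^2_1)$, which is exactly the assertion.

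First I would fix null coordinates $(u,v)$, $u = x+t$, $v = x-t$, and set $P_u = \{u : (u,v) \in U \text{ for some } v\}$ and $P_v = \{v : (u,v) \in U \text{ for some } u\}$, so that $D_U = (a_1,a_2) \times (b_1,b_2)$, where $(a_1,a_2)$ is the smallest open interval containing $P_u$ and $(b_1,b_2)$ the smallest one containing $P_v$. By Theorem \ref{null}, every $F \in G(U)$ is of the form $F(u,v) = (\psi(u),\chi(v))$ or $F(u,v) = (\psi(v),\chi(u))$ for diffeomorphisms $\psi,\chi$ between open subsets of $\mathbb{R}$ that are both increasing or both decreasing. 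Since $F$ carries $U$ bijectively onto $U$, comparing projections gives $\psi(P_u) = P_u$ and $\chi(P_v) = P_v$ in the first case (and $\psi(P_v)=P_u$, $\chi(P_u)=P_v$ in the swapped case), so the one-dimensional factors of $F$ are monotone bijections of the projections.

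Next I would extend $F$ to $D_U$. The point is that the monotone diffeomorphism $\psi$ of $P_u$ extends to a monotone diffeomorphism $\bar\psi$ of the interval $(a_1,a_2)$, and likewise $\chi$ to $\bar\chi$ of $(b_1,b_2)$; then $\bar F := \bar\psi \times \bar\chi$ (resp. $(u,v)\mapsto(\bar\psi(v),\bar\chi(u))$ in the swapped case) is a conformal diffeomorphism of $D_U$ onto $D_U$, again of the type permitted by Theorem \ref{null}. When $U$ is connected this extension needs no choices at all: $P_u$ and $P_v$ are then themselves the open intervals $(a_1,a_2)$ and $(b_1,b_2)$, so $\psi$ and $\chi$ already are diffeomorphisms of the sides of $D_U$ and one takes $\bar F$ to be the very same pair of one-dimensional maps regarded as acting on $D_U$. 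Granting the extension, $F \mapsto \bar F$ is a homomorphism — composition of maps of the above product/swapped type passes to composition of the underlying one-dimensional maps, exactly as in the structure of $G(\mathbb{R}^2_1)$ recorded in the Corollary — and it is injective since $\bar F$ restricts to $F$ on $U \subseteq D_U$, so $\bar F = \mathrm{id}_{D_U}$ forces $F = \mathrm{id}_U$.

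Finally, the Corollary preceding this theorem furnishes a group isomorphism $G(D_U) \cong G(\mathbb{R}^2_1)$, realised by conjugation through the conformal equivalence $\mathbb{R}^2_1 \cong D_M \cong D_U$ coming from Theorem \ref{confom-1} and the Lemma; composing $G(U) \hookrightarrow G(D_U)$ with it gives the desired embedding. The step I expect to be the real obstacle is precisely the canonical extension of the boundary-compatible factors $\psi,\chi$ from the projections $P_u,P_v$ to the full intervals: when $U$ is disconnected the projections can be proper open subsets of those intervals, and one must check that the monotone bijections they carry extend to monotone diffeomorphisms of the whole intervals in a way rigid enough to keep $F\mapsto\bar F$ a well-defined homomorphism. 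For connected $U$ this difficulty evaporates, and the general case should reduce to bookkeeping about how $F$ permutes the components of $U$ together with the corresponding gaps of $P_u$ and $P_v$.
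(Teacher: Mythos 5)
Your proposal follows essentially the same route as the paper: the paper's own argument is contained in the paragraph immediately preceding the theorem, where each conformal equivalence $F$ of $U$ is extended to $D_U$ and then transported to $\mathbb{R}^2_1$ via the conformal equivalence $\mathbb{R}^2_1 \cong D_M \cong D_U$ coming from Theorem \ref{confom-1} and the Lemma, exactly as you do. The one place where you go beyond the paper is also the step you flag as the real obstacle, and it is a genuine issue that the paper glosses over by simply asserting that ``$F$ can be uniquely extended to $D_U$'': when $U$ is disconnected the projections $P_u$, $P_v$ may be proper open subsets of $(a_1,a_2)$ and $(b_1,b_2)$, the extension of $\psi$ and $\chi$ across the gaps is not unique (so $F\mapsto\bar F$ is not obviously a well-defined homomorphism), and for factors that permute components of the projection without preserving the order on $\mathbb{R}$ (for instance $\psi'>0$ on each component but interchanging two components) no monotone extension to the whole interval exists at all. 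For connected $U$ your argument and the paper's coincide and are complete; for disconnected $U$ both carry the same unresolved gap, which your write-up at least identifies honestly while the paper does not.
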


Obviously, the above theorem does not hold in $\mathbb{R}^2$ and
finally, we have the following.

\begin{thm}
Let $U$ and $V$ be bounded, open subsets of $\mathbb{R}^2_1$. The
necessary and sufficient conditions for $U$ and $V$ to be
conformally equivalent is that there exist diffeomorphisms $\psi$
and $\chi$ defined on bounded subsets of $\mathbb{R}$ such that
$\psi$ and $\chi$ have the same increasing patterns and either
$F(u,v) = \big( \psi(u), \chi(v) \big)$ or $F(u,v) = \big(
\psi(v), \chi(u) \big)$ induces a bijection from $U$ onto $V$.
\end{thm}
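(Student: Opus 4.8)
The plan is to use the structural classification of conformal equivalences on open subsets of $\mathbb{R}^2_1$ developed in Theorem \ref{null} and the rectangle lemma, translating everything into null coordinates $u = x+t$, $v = x-t$. First I would establish the forward direction: if $F : U \to V$ is a conformal equivalence, then $F$ extends to a conformal equivalence of $\mathbb{R}^2_1$ (since $\mathbb{R}^2_1$ and $D_U$, $D_V$ are conformally equivalent, and by the remark preceding this theorem $F$ uniquely extends to a map $D_U \to D_V$). By Theorem \ref{null} applied to this extension, $F$ is given in null coordinates by $F(u,v) = (\psi(u), \chi(v))$ or $F(u,v) = (\psi(v), \chi(u))$ for diffeomorphisms $\psi$, $\chi$ with matching monotonicity. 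Restricting to $U$, the domains of $\psi$ and $\chi$ are the projections of $U$ onto the two null axes, which are bounded since $U$ is bounded; and $F$ restricted to $U$ is by hypothesis a bijection onto $V$. This gives the stated necessary condition.

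For the converse, I would start from the hypothesized $\psi$, $\chi$ on bounded subsets of $\mathbb{R}$ with matching increasing patterns, and the assumption that $F(u,v) = (\psi(u),\chi(v))$ (or the swapped form) induces a bijection from $U$ onto $V$. The key point is that such an $F$, being of the form covered by Theorem \ref{null} with $\psi'\chi' > 0$ (matching monotonicity), is automatically conformal on its domain: indeed, in null coordinates the metric is (up to constant) $du\,dv$, and $F^\ast(dU\,dV) = \psi'(u)\chi'(v)\,du\,dv$, so $F$ is conformal with positive conformal factor. Hence $F : U \to V$ is a conformal diffeomorphism, i.e. a conformal equivalence. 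I would spell out the one-line pullback computation (or simply invoke Theorem \ref{null} and the Corollary, which already assert that maps of this form are precisely the conformal ones) and note that boundedness of the domains of $\psi$, $\chi$ is exactly the statement that $U$ is bounded.

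The main obstacle — really the only subtle point — is the bookkeeping around domains and extension in the forward direction: one must check that the projections of $U$ onto the null axes are genuinely the natural domains of $\psi$ and $\chi$, that $F$ does extend to $D_U$, and that restricting the extended $\psi, \chi$ back down does not lose the bijectivity onto $V$. All of this is handled by the remark immediately preceding the theorem together with the lemma identifying $D_U$ with $D_M \cong \mathbb{R}^2_1$; so I would cite those rather than redo the argument. Everything else is a direct translation between the $(x,t)$ and $(u,v)$ pictures and the elementary observation that $F(u,v) = (\psi(u),\chi(v))$ is conformal precisely when $\psi'$ and $\chi'$ have the same sign, which is already recorded in Theorem \ref{null}.
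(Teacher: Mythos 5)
Your proposal is correct and follows exactly the route the paper intends: the paper in fact states this theorem with no proof at all, treating it as an immediate consequence of Theorem \ref{null} together with the preceding remarks about $D_U$, which is precisely the argument you give (necessity from the classification in Theorem \ref{null} plus boundedness of the null-axis projections, sufficiency from the one-line pullback computation showing $\psi'\chi'>0$ gives a positive conformal factor). The only cosmetic remark is that your detour through the extension to $D_U$ in the forward direction is unnecessary, since Theorem \ref{null} already applies to conformal transformations defined on an arbitrary open subset $U$.
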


\section{Acknowledgement}

This research was supported by Basic Science Research Program
through the National Research Foundation of Korea(NRF) funded by
the Ministry of Education, Science and Technology(2013052685).

\end{document}